\DeclareMathOperator{\id}{id}
\DeclareMathOperator{\im}{im}
\DeclareMathOperator{\Tor}{Tor}
\DeclareMathOperator{\colim}{colim}
\DeclareMathOperator{\hocolim}{hocolim}
\DeclareMathOperator{\ho}{Ho}
\DeclareMathOperator{\op}{op}
\newcommand{\bE}{\mathbf{E}}
\newcommand{\bF}{\mathbf{F}}
\newcommand{\bR}{\mathbf{R}}
\newcommand{\bT}{\mathbf{T}}
\newcommand{\bZ}{\mathbf{Z}}
\newcommand{\bbL}{\mathbb{L}}
\newcommand{\bbR}{\mathbb{R}}
\newcommand{\sK}{\mathscr{K}}
\newcommand{\sS}{\mathscr{S}}
\newcommand{\sU}{\mathscr{U}}
\newcommand{\Si}{\Sigma^{\infty}}
\newcommand{\Om}{\Omega^{\infty}}
\renewcommand{\phi}{\varphi}
\providecommand{\x}{\times}
\providecommand{\sma}{\wedge}
\providecommand{\arr}{\longrightarrow}
\providecommand{\mGL}{\mathrm{GL}}
\providecommand{\Aut}{\mathrm{Aut}}
\providecommand{\End}{\mathrm{End}}
\providecommand {\Mod}{\mathrm{Mod}}
\numberwithin{equation}{section}
\newtheorem{theorem}{Theorem}[section]
\newtheorem{proposition}[theorem]{Proposition}
\newtheorem{lemma}[theorem]{Lemma}
\theoremstyle{definition}
\newtheorem{definition}[theorem]{Definition}
\newtheorem{remark}[theorem]{Remark}
\newtheorem{construction}[theorem]{Construction}
\title[Bundles of spectra and algebraic $K$-theory]{Bundles of spectra and algebraic $K$-theory}
\author[John A. Lind]{John A. Lind}
\address{Fakult\"at f\"ur Mathematik\\Universit\"at Regensburg}
\email{john.alexander.lind@gmail.com}
\begin{document}

\begin{abstract}
A parametrized spectrum $E$ is a family of spectra $E_{x}$ continuously parametrized by the points $x \in X$ of a topological space.  We take the point of view that a parametrized spectrum is a bundle-theoretic geometric object.  When $R$ is a ring spectrum, we consider parametrized $R$-module spectra and show that they give cocycles for the cohomology theory determined by the algebraic $K$-theory $K(R)$ of $R$ in a manner analogous to the description of topological $K$-theory $K^0(X)$ as the Grothendieck group of vector bundles over $X$.  We prove a classification theorem for parametrized spectra, showing that parametrized spectra over $X$ whose fibers are equivalent to a fixed $R$-module $M$ are classified by homotopy classes of maps from $X$ to the classifying space $B \Aut_{R} M$ of the topological monoid of $R$-module equivalences from $M$ to $M$.  
\end{abstract}

\maketitle

\noindent AMS MSC2010 classification: 55R15, 55R65, 55R70, 55P43, 19D99

\tableofcontents

\section{Introduction}

Contemporary algebraic topology features a vast array of generalized cohomology theories, but our knowledge of their geometric content remains limited to the examples of ordinary cohomology theories, topological $K$-theory and cobordism theories.  In this paper we describe the geometry underlying the cohomology theory associated to the algebraic $K$-theory of a ring, or more generally a ring spectrum.  The higher algebraic $K$-groups $K_n(R)$ of a ring spectrum $R$ may be defined as the homotopy groups of the algebraic $K$-theory spectrum $K(R)$.  By the geometry of $K(R)$-theory, we mean a geometric description of the cocycles whose equivalence classes form the cohomology groups $K(R)^{*}(X)$ associated to the spectrum $K(R)$.  Our methods only give a description of the degree zero cohomology group $K(R)^0(X)$ and the result is reminiscent of the description of topological $K$-theory $K^0(X)$ in terms of the Grothendieck group of vector bundles over $X$.  The analogue of vector bundles for $K(R)$-theory are parametrized spectra that are modules over the ring spectrum $R$.  We call these objects $R$-bundles.  The main result is the following:

\begin{theorem}\label{main_theorem}
Let $R$ be a connective ring spectrum and let $K(R)$ be the algebraic $K$-theory spectrum of $R$.  Then for any finite CW complex $X$, there is a natural isomorphism
\[
K(R)^{0}(X) \cong \mathrm{Gr} [ \text{lifted, free, finite rank $R$-bundles over $X$}]
\]
between the degree zero $K(R)$-cohomology classes of $X$ and the Grothendieck group of the abelian monoid of equivalence classes of lifted $R$-bundles over $X$ that are free and finite rank as parametrized $R$-modules.
\end{theorem}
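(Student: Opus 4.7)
The plan is to reduce the theorem to a computation with classifying spaces via the classification theorem for parametrized spectra announced in the abstract, and then to identify the result with $K(R)^{0}(X)$ through a standard group-completion model for algebraic $K$-theory. The argument divides naturally into three steps: first, use the classification theorem to replace the monoid of bundles by a disjoint union of mapping sets $[X, B\Aut_{R}(R^{n})]$; second, recognise the assembly of these pieces as the infinite loop space of $K(R)$; third, match Grothendieck completion on the bundle side with passage to the group completion on the mapping-space side.

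Concretely, applying the classification theorem to the fiber $R^{n}$ identifies the set of equivalence classes of parametrized $R$-module spectra over $X$ with fiber equivalent to $R^{n}$ with $[X, B\Aut_{R}(R^{n})]$. Interpreting ``lifted'' to mean that the rank $n$ is part of the data, the abelian monoid of equivalence classes of lifted finite rank free $R$-bundles over $X$ becomes $\coprod_{n \geq 0} [X, B\Aut_{R}(R^{n})]$, with monoid structure induced by fibrewise direct sum, i.e.\ from the block-sum pairings $B\Aut_{R}(R^{n}) \times B\Aut_{R}(R^{m}) \arr B\Aut_{R}(R^{n+m})$. These pairings assemble $N := \coprod_{n} B\Aut_{R}(R^{n})$ into an $E_{\infty}$ topological monoid, so the monoid of lifted $R$-bundles over $X$ is naturally $\pi_{0}\Map(X, N)$ with its induced sum.

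For a connective ring spectrum $R$ the group-completion model of algebraic $K$-theory yields a weak equivalence $\loops K(R) \simeq \Omega B N$, so
\[
K(R)^{0}(X) = [X, \loops K(R)] \cong [X, \Omega B N].
\]
It remains to produce a natural isomorphism $[X, \Omega B N] \cong \mathrm{Gr}\, \pi_{0}\Map(X, N)$. For a finite CW complex $X$, every map $X \arr \Omega B N$ is, after translation by a suitable power of $[R]$, represented by a map into some single component $B\Aut_{R}(R^{n})$, and two such representatives are identified if and only if they become equivalent after further stabilisation by a common free $R$-bundle; this is precisely the universal property of Grothendieck completion applied componentwise. Chaining the three identifications yields the claimed natural isomorphism.

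The principal obstacle is this last compatibility. The group completion theorem is a priori only a homology equivalence $BN \arr BN^{\mathrm{gp}}$, so compatibility of $[X, -]$ with Grothendieck completion of the underlying monoid is not formal. The finiteness of $X$ enters twice: to reduce every map $X \arr \Omega B N$ to a finite-rank representative, and to ensure that the plus construction is invisible to $[X, -]$ after shifting by a suitable integer. Handling this cleanly, probably via the identification $\Omega B N \simeq \bbZ \times B\Aut_{R}(R^{\infty})^{+}$ together with the Milnor-type short exact sequence for the filtered system $\{B\Aut_{R}(R^{n})\}_{n}$, is the technical heart of the argument.
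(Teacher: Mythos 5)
Your proposal misreads the key definition. ``Lifted $R$-bundle over $X$'' does \emph{not} mean an $R$-bundle over $X$ with the rank recorded; it means a pair consisting of an $H_{*}$-acyclic fibration $p \colon Y \to X$ of CW complexes together with an $R$-bundle $E$ over $Y$ (Definition 9.1 of the paper). The paper's introduction flags this explicitly: the plus construction radically changes homotopy type, ``this forces the bundles that define cocycles for $K(R)$-theory to be \emph{lifted} $R$-bundles over $X$, meaning $R$-bundles defined up to covers of $X$ with homologically trivial fibers.'' By flattening this to ``rank is part of the data'' you have effectively rephrased the theorem to assert that ordinary finite rank free $R$-bundles over $X$ group-complete to $K(R)^{0}(X)$, which is false in general: for $X = S^1$ and $R = H\bZ$, the homotopy classes $[S^1, B\mGL_{\infty}\bZ]$ are conjugacy classes in $\mGL_{\infty}\bZ$, whereas $[S^1, B\mGL_{\infty}\bZ^{+}] = K_1(\bZ) = \bZ/2$, and no shift by an integer reconciles the two. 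The sentence ``ensure that the plus construction is invisible to $[X, -]$ after shifting by a suitable integer'' is exactly the step that cannot be made to work, and the paper's formulation is designed precisely to avoid needing it.

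The actual argument replaces your problematic third step by the Hausmann--Husemoller theory of acyclic maps. Given a lifted bundle $(E, Y \xrightarrow{p} X)$, the classifying map $Y \to B\mGL_n R$ kills the perfect normal subgroup $\ker(\pi_1 Y \to \pi_1 X)$ after composition with $B\mGL_n R \to B\mGL_n R^{+}$, hence descends to a map $X \to B\mGL_n R^{+}$; conversely, any map $X \to B\mGL_n R^{+}$ pulls back the acyclic fibration $B\mGL_n R \to B\mGL_n R^{+}$ to an $H_*$-acyclic cover $Y \to X$ carrying an $R$-bundle. This gives the bijection $\colim_n \Phi_R^n(X) \cong [X, B\mGL_{\infty}R^{+}]$ directly, with no appeal to a group-completion homology equivalence being homotopy equivalence on mapping spaces. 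The other ingredients (Lemma 9.2 constructing a ``virtual inverse'' lifted bundle via the $H$-space structure on $B\mGL_\infty R^{+}$, Lemma 9.3 trivialising virtually trivial lifted bundles, and the rank splitting $\overline{K}_R(X) \cong \ker\psi \oplus [X, K_0(R)]$) are the pieces your sketch needed but did not supply. So the gap is not merely technical: you need to recover the correct definition of ``lifted'' before the first step of your outline even parses, and once you do, the Hausmann--Husemoller route supersedes your planned confrontation with group completion.
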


\noindent We will give a precise meaning to all of the terms occurring in the statement of the theorem in \S \ref{section:bundle_classification} and \S\ref{proving_main_theorem_section}, but for now we note that an $R$-bundle $E$ over $X$ is free of finite rank if every fiber $E_x$ admits an equivalence of $R$-modules to the $n$-fold wedge $R^{\vee n}$ for some $n \geq 0$.  

Our geometric description of $K(R)$-theory is inspired by previous work.  When $R$ is a discrete ring, Karoubi gave a similar description of the cocycles for $K(R)$-theory in terms of fibrations of projective $R$-modules \cite{Kar}.  When $R$ is the connective complex $K$-theory spectrum $ku$, Baas, Dundas, Richter and Rognes interpreted the cocycles of $K(ku)$-theory as 2-vector bundles, which are a categorification of complex vector bundles \citelist{\cite{BDR} \cite{BDRR}}.  

By definition, $K(R)^0(X)$ is the group of homotopy classes of maps from $X$ to the underlying infinite loop space of the algebraic $K$-theory spectrum, whose homotopy type can be described using Quillen's plus construction:
\[
\Omega^{\infty} K(R) \simeq K_0(R) \times B \mGL_{\infty}^{+}(R).
\]
Here $K_0(R) = K_0^{f} (\pi_0 R)$ is the Grothendieck group of free modules over the discrete ring $\pi_0 R$ and $B \mGL_{\infty}^{+}(R)$ is Quillen's plus construction applied to the H-space $B \mGL_{\infty}(R) = \colim_n B\mGL_n(R)$, where $B \mGL_n(R)$ is the classifying space of the derived mapping space $\mGL_n(R) = \mathbf{Aut}_R(R^{\vee n})$ of $R$-module equivalences $R^{\vee n} \arr R^{\vee n}$.  

One important point is that, unlike the case of vector bundles and complex $K$-theory, the plus construction can radically change the homotopy type.  This forces the bundles that define cocycles for $K(R)$-theory to be \emph{lifted} $R$-bundles over $X$, meaning $R$-bundles defined up to covers of $X$ with homologically trivial fibers---see \S\ref{proving_main_theorem_section} for a precise definition.

The term ``bundle'' is perhaps a little naive: as one continuously varies the basepoint in $X$, the fibers of a parametrized spectrum are weak homotopy equivalent, but need not be strictly isomorphic.  Put another way, to describe a parametrized spectrum in terms of cocycle data would require a derived or infinitely homotopy coherent descent condition.  This point of view naturally leads to a description of parametrized objects as homotopy sheaves with values in a quasicategory, as developed by Ando, Blumberg, Gepner, Hopkins and Rezk \citelist{\cite{ABG1} \cite{ABG2} \cite{ABGHR2}}.  Rather than using quasicategories, we follow the foundations of parametrized stable homotopy theory developed by May and Sigurdsson \cite{MS}.  In their framework, parametrized spectra are defined in terms of a ``total object'' over $X$ instead of cocycle data.  Homotopical control of the fiber homotopy type of parametrized spectra is maintained via the framework of Quillen model categories.

Theorem \ref{main_theorem} follows from a general classification theorem for parametrized $R$-module spectra.  In this paper, a spectrum means an orthogonal spectrum, and we use the stable model structure on orthogonal ring and module spectra from Mandell-May-Schwede-Shipley \cite{MMSS}.  Given an $R$-module $M$, we say that a parametrized $R$-module spectrum $E$ over $X$ has fiber $M$ if the fiber $E_{x}$ of $E$ over every point $x \in X$ admits a stable equivalence $E_{x} \simeq M$ of $R$-modules.  We use the terms $R$-bundle with fiber $M$ and parametrized $R$-module with fiber $M$ interchangeably.  Let $\mathbf{Aut}_{R} M$ be the derived mapping space of homotopy automorphisms of $M$ as an $R$-module.  In \S\ref{section:bundle_classification}, we explain how to realize this homotopy type as a group-like topological monoid, so that we may form the classifying space $B \mathbf{Aut}_{R} M$, and prove the following classification theorem.

\begin{theorem}\label{thm:classification_of_Rbundles} Let $X$ be a CW complex, let $R$ be a ring spectrum and let $M$ be an $R$-module.  There is a natural bijection between stable equivalence classes of $R$-bundles over $X$ with fiber $M$ and homotopy classes of maps $[X, B \mathbf{Aut}_{R} M]$.
\end{theorem}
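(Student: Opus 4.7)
The plan is to reduce the classification of $R$-bundles to that of principal $G$-fibrations for $G = \Aut_{R} M$, for which the excerpt indicates a separate classification by $[X, BG]$ is proved. The reduction is implemented by a frame-bundle/associated-bundle correspondence in the style of the classical proof for fiber bundles, but performed in the $A_{\infty}$ setting.

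First, I would construct a frame-bundle functor. Given an $R$-bundle $E$ with fiber $M$, let $P(E) \arr X$ be the parametrized space whose fiber over $x$ is the derived space of $R$-module stable equivalences $M \arr E_{x}$. The $s$-cofibrancy and $s$-fibrancy hypotheses on $M$ guarantee this derived space is computed by a strict mapping space, that each fiber is nonempty and is a right $G$-torsor under precomposition, and hence that $P(E) \arr X$ is a principal $G$-fibration in the sense used earlier in the paper.

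Second, I would construct an associated-bundle functor by a fiberwise bar construction. Given a principal $G$-fibration $P \arr X$, let $E(P) \arr X$ be obtained fiberwise as the two-sided bar construction of $P$ with $M$ over $G$. Because each fiber $P_{x}$ is equivalent as a right $G$-space to $G$ itself, and the standard bar resolution of $M$ over $G$ is stably equivalent to $M$, each fiber $E(P)_{x}$ is stably equivalent to $M$. The $R$-action on $M$ propagates through the realization to an $R$-action on $E(P)$, so $E(P) \arr X$ is an $R$-bundle with fiber $M$. One then verifies these two constructions are mutually inverse on equivalence classes: the natural map $P \arr P(E(P))$ is a fiberwise equivalence because on each fiber it identifies the torsor $P_{x}$ with self-equivalences of the bar resolution via precomposition with the canonical element, and the natural evaluation map $E(P(E)) \arr E$ is a fiberwise stable equivalence, being the standard bar-resolution map for a module over an $A_{\infty}$ algebra on each fiber. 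Combining with the $[X, BG]$ classification of principal $G$-fibrations yields the claimed bijection.

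The main obstacle will be maintaining the correct level of homotopical control throughout. Because $G$ is only an $A_{\infty}$ space, the bar constructions must be performed operadically, and the parametrized constructions must interact correctly with the $s$-model structures on parametrized $R$-modules from Mandell--May--Schwede--Shipley and May--Sigurdsson. Verifying that $P(E)$ is genuinely principal (not merely a fiberwise torsor up to homotopy) and that $E(P)$ has strict fiber $M$ up to stable equivalence both rely delicately on the $s$-cofibrancy and $s$-fibrancy hypotheses on $M$; without them, neither the mapping space nor the bar construction has the correct fiberwise homotopy type.
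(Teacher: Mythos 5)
Your proposal follows essentially the same two-functor strategy as the paper: a frame-bundle functor sending an $R$-bundle $N$ to the parametrized space of fiberwise $R$-module equivalences out of $M$ (the paper's $\bE^{R}(M^{\circ}, N)$, built from $\Omega^{\bullet}_{B}F^{R}_{B}(M^{\circ}, N)$ followed by restriction to invertible path components), an associated-bundle functor implemented as a derived tensor/bar construction over $\Sigma^{\infty}_{+}\Aut_{R}M$ (the paper's $T(Y) = M^{\circ} \sma_{\Sigma^{\infty}_{+}\Aut^{c}_{R}M} \Sigma^{\bullet}_{B}Y$), and then deducing the classification from the principal-fibration version. You also correctly identify the crux of the work as maintaining homotopical control of fibers under these constructions, which is precisely what the paper's Lemma~\ref{lemma:derived_commute} and Proposition~\ref{prop:derived_commute} on commuting derived fibers past derived $T$ and $\bE$ are for.
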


When $M = R$, Theorem \ref{thm:classification_of_Rbundles} says that line $R$-bundles over $X$ are classified by the classifying space $B \mGL_1 R$ of the units of $R$.  The construction of the line $R$-bundle associated to a map $f \colon X \arr B \mGL_1 R$ is the generalized Thom spectrum studied by Ando-Blumberg-Gepner-Hopkins-Rezk \citelist{\cite{ABGHR1} \cite{ABGHR2}}; see Remark \ref{construction_thomspectra}.  From another point of view, a parametrized spectrum with fiber $M$ gives a twisted form of the cohomology theory $M$.  We can then view Theorem \ref{thm:classification_of_Rbundles} as giving a general classification theorem of the twists of $M$-theory.

In their $\infty$-categorical approach to parametrized homotopy theories, Ando-Blumberg-Gepner \cite{ABG2}*{B.4} proved that the quasicategory of morphisms 
$
\Pi_{\infty} X \arr \sS_{\infty}
$
from the singular simplicial complex of a space $X$ to the quasicategory of spectra $\sS_{\infty}$ is equivalent to the quasicategory associated to the May-Sigurdsson model category of parametrized spectra over $X$.  Variants of their arguments can be used to prove results in the same vein as Theorem \ref{thm:classification_of_Rbundles}.  The proof in this paper is more concrete, using the pullback of a universal bundle to induce the equivalence instead of Lurie's straightening functor \cite{HTT}*{\S3.2.1}.

In order to prove Theorem \ref{thm:classification_of_Rbundles}, we compare $R$-bundles with fiber $M$ and principal $\Aut_{R}M$-fibrations, where $\Aut_{R}M$ is a point-set model for the derived mapping space of homotopy automorphisms constructed out of appropriate cofibrant and fibrant approximations.  Much of the technical material in the paper goes into maintaining control of the fiberwise homotopy type of the principal fibration associated to an $R$-bundle with fiber $M$.  By carefully intertwining a Quillen-type model structure and a Hurewicz-type model structure, we show that this construction induces a bijection of equivalence classes, and reduce the proof of the classification theorem for $R$-bundles with fiber $M$ to the classification theorem for principal fibrations.  

The classification theorem for $R$-bundles and the construction of the principal fibration associated to an $R$-bundle has recently been used by Cohen and Jones \citelist{\cite{cohen_jones} \cite{cohen_jonesII}} in their study of the gauge group of parametrized spectra and the $K$-theory of string topology.  


\subsection*{Outline}  In \S\ref{sec:new_principal_fibrations}, we collect the necessary facts about model category structures on parametrized spaces, introduce a homotopical notion of a $G$-torsor and compare it to that of a principal $G$-fibration, where $G$ is a topological monoid.  The model category structures on parametrized spectra are recalled in \S\ref{section:modelcat_spectra}, then in \S\ref{section:prep} we construct the principal $\Aut_{R}M$-fibration associated to a bundle with fiber $M$.  We prove in \S\ref{section:bundle_classification} that this construction provides an inverse up to homotopy to the associated bundle construction
\[
Y \longmapsto M \sma_{\Sigma^{\infty}_{+}\Aut_{R} M} \Sigma^{\infty}_{B} Y,
\]
and prove Theorem \ref{thm:classification_of_Rbundles}.  The proof of Theorem \ref{main_theorem} is given in \S\ref{proving_main_theorem_section}.

\subsection*{Topological Conventions}  We will rely heavily on the foundations for parametrized homotopy theory developed by May-Sigurdsson \cite{MS}.  As explained there, it is advantageous to leave the category $\sU$ of compactly generated spaces.  By a ``space'' we mean a $k$-space as defined in \cite{MS}*{1.1.1}, and we denote the category of spaces by $\sK$.  We will always assume that the base space (denoted by $B$ or $X$) is compactly generated.  We assume throughout that the ring spectrum $R$ is well-grounded, meaning that each constituent space is compactly generated and non-degenerately based.

\subsection*{Acknowledgments}  I thank Peter May for his support and interest in this project, as well as Matt Ando, Andrew Blumberg and Mike Shulman for stimulating conversations.  The advice of an anonymous referee improved the quality of the manuscript, and I thank them for their efforts.  This work was partially supported by the DFG through SFB1085


\section{Model category theory and principal fibrations}\label{sec:new_principal_fibrations}

In this section, we recall some basic material on model category structures on the category of parametrized spaces from May-Sigurdsson \cite{MS}.  We then introduce a homotopical notion of a $G$-torsor, where $G$ is a topological monoid, and show that it is equivalent to that of a principal $G$-fibration.

The category $\sK$ of $k$-spaces admits a compactly generated topological model structure with weak equivalences the weak homotopy equivalences, fibrations the Serre fibrations, and cofibrations the retracts of relative cell complexes.  We refer to this model structure as the $q$-model structure, and use the terms $q$-equivalences, $q$-fibrations, and $q$-cofibrations for its weak equivalences, fibrations, and cofibrations.  Let $B$ be a compactly generated topological space.  The category $\sK/B$ of spaces $(X, p) = (p \colon X \arr B)$ over $B$ admits a model structure whose weak equivalences and fibrations are detected by the forgetful functor $(X, p) \longmapsto X$ to the $q$-model structure on $\sK$.  An ex-space is a space $(X, p)$ over $B$ along with a map $s \colon B \arr X$ such that $p \circ s = \id_{B}$.  The category $\sK_{B}$ of ex-spaces $(X, p, s)$ also admits a model structure given by the forgetful functor to the $q$-model structure on $\sK$.  We refer to these model structures as the $q$-model structure on $\sK/B$ and $\sK_{B}$, respectively.  While both of these model structures are compactly generated and topological, they are not well-grounded, in the sense of \cite{MS}*{\S 5.3-5.6}.  The problem is that the generating $q$-cofibrations and acyclic $q$-cofibrations do not satisfy the homotopy extension property defined in terms of fiberwise or fiberwise pointed homotopies in $\sK/B$ or $\sK_{B}$ \cite{MS}*{5.1.7, 5.1.8}.  Instead, they are only Hurewicz cofibrations in the underlying category of spaces.  As a result, applications of the glueing lemma that would allow standard inductive arguments over cell complexes built out of the generating sets fail for these model structures.  In attempting to construct a stable model structure on parametrized spectra based on the $q$-model structure, the verification that relative cell complexes built out of the generating acyclic cofibrations are weak equivalences is unattainable.

As an alternative, May-Sigurdsson develop the $qf$-model structure on $\sK/B$ and $\sK_{B}$ \cite{MS}*{\S 6.1-6.2}.  The $qf$-model structure also has the $q$-equivalences as weak equivalences, so that the associated homotopy category is still the homotopy category of spaces over $B$, but there are fewer $qf$-cofibrations than $q$-cofibrations.  A $qf$-fibration need not be a Serre fibration but is a quasifibration.  For our purposes, we do not need the details of the definitions, only the fact that in each case the $qf$-model structure is a well-grounded compactly generated model category.  We will work in the un-sectioned context, building well-grounded compactly generated model structures on parametrized diagram spaces out of the $qf$-model structure on $\sK/B$.  

The category of spaces over $B$ is tensored over the category of spaces via the cartesian product
\begin{align*}
\sK \times \sK/B &\longrightarrow \sK/B \\
 (X, Y \overset{p}{\arr} B) &\longmapsto (X \times Y \xrightarrow{p \circ \pi_2} B).
\end{align*} 
If $G$ is a topological monoid, then we use this structure to define the notion of an object of $\sK/B$ with a strictly associative and unital (left) action of $G$, which we call a $G$-space over $B$.  The $G$-spaces over $B$ form a category $G \sK/B$ with morphisms the $G$-equivariant maps over $B$.  Equivalently, the category $G \sK/B$ is the comma category $(G \sK \downarrow B)$ formed in the category of $G$-spaces, where we consider $B$ to have a trivial action of $G$.  We will also need the $qf$-model structure on the category of $G$-spaces over $B$.

\begin{proposition}\label{prop:base_change_quillen}
Let $G$ be a topological monoid.  There is a well-grounded compactly generated model category structure on the category of $G \sK/B$ of $G$-spaces over $B$ with weak equivalences and fibrations created by the forgetful functor to the $qf$-model structure on spaces over $B$.  If $f \colon A \arr B$ is a map of spaces, then the pullback functor $f^* \colon G \sK/B \arr G \sK/A$ and its left adjoint $f_{!}$ form a Quillen adjoint pair for the $qf$-model structure.  If $f$ is a $q$-equivalence of spaces, then $(f_{!}, f^*)$ is a Quillen equivalence.  
\end{proposition}
\begin{proof}
The corresponding statements when $G = \ast$ are 6.2.5, 7.3.4, and 7.3.5 of \cite{MS}.  The generating cofibrations and acyclic cofibrations for the associated model structure on the category of parametrized $G$-spaces are obtained by applying the free $G$-space functor $G \times (-)$, defined in terms of the tensor of a space and a space over $B$, to the generating sets for the $qf$-model structure on $\sK/B$.  The result then follows by directly checking the criteria for compactly generated model structures in \cite{MS}*{5.5.1}.  Note that the $qf$-model structure on $G\sK/B$ inherits the property of being right proper from $\sK/B$, so it is a well-grounded model structure, see \cite{MS}*{5.5.4}.  The claims about the adjunction follow directly from the case $G = \ast$.
\end{proof}

In particular, the fiber functor $i_{b}^* = (-)_{b}$ is right Quillen on the category of $G$-spaces over $B$.  We let $\bF_{b} = \bR i_{b}^*$ denote its right derived functor.  In other words, $\bF_{b} Y$ is the object of the homotopy category of $G$-spaces determined by the fiber $(R^{qf}Y)_{b}$ of a $qf$-fibrant approximation of $Y$.  

While the following terminology is non-standard, it will be useful as an intermediary between the highly structured notion of a principal $G$-fibration and the model-theoretic fiber conditions on parametrized spectra.

\begin{definition}\label{def:Gtorsor}
A $G$-torsor over $B$ is a $G$-space $(Y, p)$ over $B$ for which every derived fiber $\bF_{b}Y$ admits a zig-zag of $q$-equivalences of $G$-spaces to $G$, considered as a $G$-space via left multiplication.  We write $\ho (G \Tor/B)$ for the full subcategory of the homotopy category $\ho (G \sK/B)$ of $G$-spaces over $B$ that is spanned by the $G$-torsors.
\end{definition}

The notion of a $G$-torsor is native to the Quillen model structure.  The following definition instead uses the Hurewicz model structure.
\begin{definition}
A principal $G$-fibration over $B$ is a $G$-space $(Y, p)$ over $B$ for which
\begin{itemize}
\item the structure map $p \colon Y \arr B$ is an $h$-fibration of $G$-spaces, meaning that it has the homotopy lifting property in the category of $G$-spaces.
\item for every $b \in B$, there is a zig-zag of weak equivalences of $G$-spaces $Y_b \simeq G$.
\end{itemize}
\end{definition}

We now construct an approximation functor $\Gamma$ in order to compare $G$-torsors with principal $G$-fibrations.  Given a $G$-space $(Y, p)$ over $B$, let $(\Gamma Y, \Gamma p)$ be the $G$-space over $B$ defined by the mapping path-space construction
\begin{align*}
\Gamma p \colon \Gamma Y =  B^I \times_{B} Y  &\xrightarrow{\mathrm{ev}_1} B \\
(\gamma, y) &\longmapsto \gamma(1),
\end{align*}
and note that the fiber $(\Gamma Y)_b$ of $\Gamma p$ is the homotopy fiber of $Y$ at $b \in B$.
Note that the map $\Gamma p$ is an $h$-fibration of $G$-spaces, since the lifting problem in the category of $G$-spaces
\[
\xymatrix{
X \ar[r]^-{(\gamma, f)} \ar[d]_{i_0} & B^I \times_{B} Y \ar[d]^{\mathrm{ev}_1} \\
X \times I \ar[r]^{h} \ar@{-->}[ur]^{\widetilde{h}} & B
}
\]
has a solution given by $\widetilde{h}(x, t) = (\lambda_{t}(x), f(x))$, where $\lambda_{t}(x)$ is the path
\[
\lambda_{t}(x)(s) = \begin{cases}
\gamma(x)(s + st) \quad & 0 \leq s \leq 1/(1 + t) \\
h(x, s + ts - 1) \quad & 1/(1 + t) \leq s \leq 1,
\end{cases}
\]
and the map $\widetilde{h}$ is evidently $G$-equivariant.

The construction of mapping path-spaces is functorial, so that $\Gamma$ defines an endofunctor of the category of $G$-spaces over $B$ with the following easily verifiable properties. 
\begin{lemma}\label{gamma_properties} \hspace{2in}
\begin{itemize}
\item[(i)]  
If $p$ is a quasifibration and every fiber $Y_b$ is $q$-equivalent to $G$, then $(\Gamma Y, \Gamma p)$ is a principal $G$-fibration over $B$. 
\item[(ii)]  Suppose that $(X, p) \arr (Y, q)$ is a $q$-equivalence of $G$-spaces over $B$.  Then the induced map $(\Gamma X, \Gamma p) \arr (\Gamma Y, \Gamma q)$ is a $q$-equivalence of principal $G$-fibrations.
\item[(iii)]  The map $(Y, p) \arr (\Gamma Y, \Gamma p)$ defined by the inclusion into constant paths is a homotopy equivalence of $G$-spaces over $B$.  If $p$ is a quasifibration, then the map restricts to a $q$-equivalence on fibers.
\end{itemize}
\end{lemma}

\begin{proposition}\label{prop:Gtors_are_Gfibs}
The functor $\Gamma$ induces a natural isomorphism between the set of $q$-equivalence classes of $G$-torsors over $B$ and the set of $q$-equivalence classes of principal $G$-fibrations over $B$.
\end{proposition}

\begin{proof}

Let $(Y, p)$ be a $G$-space over $B$.  The inclusion of the fiber into the homotopy fiber for both $(Y, p)$ and a $qf$-fibrant approximation $(R^{qf}Y, R^{qf}p)$ are related by the commutative diagram
\addtocounter{theorem}{1}
\begin{equation}\label{diagram:justify}
\xymatrix{
Y_{b} \ar[d] \ar[r] & (R^{qf}Y)_{b} \ar[d]^{\simeq} \\
(\Gamma Y)_b \ar[r]^-{\simeq} & (\Gamma R^{qf} Y)_b
}
\end{equation}
induced by fibrant approximation and the inclusion of constant paths.  Since the fibrant approximation is a $q$-equivalence of total spaces, it induces a $q$-equivalence of the homotopy fibers.  The $qf$-fibration $R^{qf}p$ is in particular a quasifibration, which gives the other displayed $q$-equivalence.  It follows that the derived fiber $\bF_{b}Y$ is canonically $q$-equivalent to the homotopy fiber $(\Gamma Y)_b$.  

In particular, $\Gamma$ takes $G$-torsors to principal $G$-fibrations and preserves $q$-equivalences.  Conversely, every principal $G$-fibration 
is a $G$-torsor.  The map $\eta \colon Y \arr \Gamma Y$ in Lemma \ref{gamma_properties}.(iii) is a $q$-equivalence of $G$-spaces, so $\Gamma$ is bijective on $q$-equivalence classes.

\end{proof}

Using the proposition, the next theorem is a restatement of May's classification theorem for principal $G$-fibrations \cite{class_and_fib}*{9.2}.

\begin{theorem}\label{thm:new_classify_Gfibs}
 Let $G$ be a grouplike topological monoid with non-degenerate basepoint and let $B$ be a CW-complex.  Then taking the pullback of $\Gamma EG \arr BG$ along a given map $B \arr BG$ defines a natural bijective correspondence between the set of homotopy classes of maps $[B, BG]$ and the set of equivalence classes of $G$-torsors over $B$.
 \end{theorem}

\section{Model categories of parametrized spectra}\label{section:modelcat_spectra}

We now summarize what we need from the theory of parametrized spectra, following chapters 11 and 12 of May-Sigurdsson \cite{MS}.  A spectrum over $B$ is an orthogonal spectrum in the category of ex-spaces over $B$.  In other words, a parametrized spectrum $X$ consists of an $O(V)$-equivariant ex-space $(X(V), p(V), s(V))$ for each finite dimensional real inner product space $V$, along with compatible $(O(V) \times O(W))$-equivariant structure maps
\[
\sigma \colon X(V) \sma_{B} S^{W}_{B} \arr X(V \oplus W)
\]
over and under $B$.  Here $S^{V}_{B} = r^* S^{V} = S^{V} \times B$ is the trivially twisted ex-space with fiber the one-point compactification $S^{V}$.  The section  of $S_{B}^{V}$ is determined by the basepoint of $S^{V}$.  The smash product $\sma_{B}$ is the fiberwise smash product of ex-spaces.  A map $f \colon X \arr Y$ of spectra over $B$ consists of an equivariant map $f(V) \colon X(V) \arr Y(V)$ of ex-spaces for each indexing space $V$ that are suitably compatible with the structure maps $\sigma$.  For each point $b \in B$, the fiber of $X$ over $b$ is the spectrum $X_{b} = i_{b}^* X$ given by the pullback of $X$ along the inclusion map $i_{b} \colon \{b\} \arr B$.  The fiber spectrum is described level-wise in terms of the fibers of its constituent ex-spaces by the formula $X_{b}(V) = X(V)_{b}$.

The level model structure on the category $\sS_{B}$ of spectra over $B$ has as weak equivalences, respectively fibrations, those maps $f$ such that each $f(V)$ is a $q$-equivalence, respectively $qf$-fibration, of ex-spaces.  We refer to these maps as the level-wise $q$-equivalences and level-wise $qf$-fibrations, respectively.  The homotopy groups of a level-wise $qf$-fibrant spectrum $X$ over $B$ are the homotopy groups $\pi_q X_{b}$ of all of the fibers of $X$.  The homotopy groups of a spectrum $X$ over $B$ are the homotopy groups $\pi_q (R^{l} X)_{b}$ of the fibers of a level-wise $qf$-fibrant approximation $R^{l}X$ of $X$.  We say that a map $X \arr Y$ of spectra over $B$ is a stable equivalence if it induces an isomorphism on all homotopy groups of all fibers.  An $\Omega$-spectrum over $B$ is a level $qf$-fibrant spectrum $X$ over $B$ whose adjoint structure maps
\[
\widetilde{\sigma} \colon X(V) \arr \Omega_{B}^{W} X(V \oplus W)
\]
are $q$-equivalences of ex-spaces over $B$.  

\begin{theorem}\label{s_model_structure_param_spectra}\cite{MS}*{12.3.10}
The category $\sS_{B}$ of spectra over $B$ admits the structure of a well-grounded compactly generated model category whose weak equivalences are the stable equivalences.  The fibrations and cofibrations are called the $s$-fibrations and the $s$-cofibrations, and the $s$-fibrant objects are the $\Omega$-spectra over $B$.  We refer to this model structure as the $s$-model structure (or stable model structure) on $\sS_{B}$.
\end{theorem}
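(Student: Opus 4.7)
The plan is to produce the stable model structure as a localization of the level $qf$-model structure on $\sS_B$, following the Mandell--May--Schwede--Shipley template for orthogonal spectra adapted to the parametrized setting by building on the $qf$-model structure on ex-spaces $\sK_B$ rather than the $q$-model structure. First I would establish a level $qf$-model structure on $\sS_B$ by arguing in exactly the same way as in the previous section, but level-wise over $V \in \cI$: the generating (acyclic) cofibrations are of the form $F_V j$ with $F_V$ the shift-desuspension functor left adjoint to evaluation at $V$ and $j$ a generating (acyclic) $qf$-cofibration of ex-spaces. The $s$-cofibrations will be defined to coincide with these level $qf$-cofibrations, and only the classes of weak equivalences and fibrations will be enlarged.

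The key step is to augment the generating acyclic cofibrations by adjoining (mapping-cylinder replacements of) the parametrized stabilization maps
\[
\lambda_{V,W} \colon F_{V \oplus W} (S^W_B) \arr F_V(S^0_B),
\]
whose adjoints are the structure maps of the free spectra. A transfinite small-object argument then produces a fibrant approximation functor $L$ whose values are exactly level $qf$-fibrant spectra whose adjoint structure maps $\widetilde \sigma$ are $q$-equivalences, i.e.\ $\Omega$-spectra over $B$. With $L$ in hand, a map $f$ is declared an $s$-equivalence precisely when $L f$ is a level equivalence, and I would verify that this coincides with inducing isomorphisms on $\pi_q X_b$ for every fiber and every $q$, matching the definition of stable equivalence given in the text.

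The main obstacle, and the reason the $qf$-model structure is indispensable, is verifying that pushouts of (new) generating acyclic cofibrations along $s$-cofibrations, and transfinite compositions thereof, remain stable equivalences. This is the content of the glueing lemma and the limit axiom for well-grounded compactly generated model categories in the sense of \cite{MS}*{\S5.3--5.6}, and it fails for the $q$-model structure precisely because generating $q$-cofibrations need not be $h$-cofibrations in the underlying category. Once one works with the $qf$-cofibrations (which are $h$-cofibrations of underlying spaces), the usual inductive arguments over cell complexes go through, and the homotopy groups of fibers commute with the colimits appearing in the small-object argument. Well-groundedness is then inherited from that of the level $qf$-model structure together with the standard verification that the generating sets are compact and compatible with the topological enrichment.

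Finally, the characterization of the $s$-fibrant objects as the $\Omega$-spectra over $B$ falls out directly from the construction: a spectrum is $s$-fibrant iff it has the right lifting property against every element of the enlarged generating acyclic set, which decomposes into (i) level $qf$-fibrancy, from the old generators, and (ii) the requirement that each $\widetilde\sigma$ be a $q$-equivalence of ex-spaces, from the new generators $\lambda_{V,W}$. All the technical details are carried out in \cite{MS}*{Ch.~12}, and I would quote that development for the formal verification.
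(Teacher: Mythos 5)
Your sketch correctly reproduces the May--Sigurdsson strategy that the paper simply cites without proof (\cite{MS}*{12.3.10}), and your identification of well-groundedness as the reason the $qf$-model structure is indispensable is exactly the point the paper itself emphasizes. One minor imprecision — the new generating acyclic cofibrations should be pushout-products of mapping-cylinder replacements of $\lambda_{V,W}$ with the generating $qf$-cofibrations, not those replacements alone — is absorbed by your explicit deferral to \cite{MS}*{Ch.~12} for the formal verification.
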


\noindent In the case $B = *$, this coincides with the stable model structure on orthogonal spectra from Mandell-May-Schwede-Shipley \cite{MMSS}.

Parametrized spaces and parametrized spectra are related by suspension spectrum and underlying infinite loop space functors.  If $(Y, p)$ is a space over $B$, the fiberwise suspension spectrum $\Si_{B} Y$ is the spectrum over $B$ defined by 
\[
(\Si_{B}Y) (V) = (Y, p)_{+} \sma_{B} S_B^{V},
\]
where
\[
(Y, p)_{+} = (Y \amalg B, p \amalg \id_{B}, \id_{B})
\]
is the ex-space over $B$ obtained from $(Y, p)$ by adjoining a disjoint section.
The right adjoint $\Om_{B}$ of $\Si_{B}$ is defined by $\Om_{B} X = X(0)$.  By inspecting the definitions, we see that there are natural isomorphisms of fibers $(\Si_{B} Y)_{b} \cong \Si_{+} Y_{b}$ and $(\Om_{B} X)_{b} \cong \Om X_{b}$.  



The category $\sS_{B}$ of spectra over $B$ is enriched and tensored over the category $\sS$ of spectra with tensor the fiberwise smash product $\sma$.
We use this structure to define parametrized module spectra.  Let $R$ be a (non-parametrized) ring spectrum.  We assume, once and for all, that $R$ is well-grounded, meaning that each $R(V)$ is well-based and compactly generated.    An $R$-module over $B$ is a spectrum $N$ over $B$ with an associative and unital map of spectra 
$R \sma N \arr N$
over $B$.

\begin{theorem}\cite{MS}*{14.1.7}
The category $R \Mod_{B}$ of $R$-modules over $B$ is a well-grounded compactly generated model category with weak equivalences and fibrations created by the forgetful functor to $\sS_{B}$.  We refer to this model structure as the $s$-model structure on $R\Mod_{B}$.
\end{theorem}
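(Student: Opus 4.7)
The plan is to lift the $s$-model structure on $\sS_{B}$ to $R\Mod_{B}$ along the free--forgetful adjunction
\[
R \sma (-) \colon \sS_{B} \rightleftarrows R\Mod_{B} \colon U,
\]
via Kan's transfer principle for cofibrantly generated model structures. Let $I$ and $J$ denote the generating sets of $s$-cofibrations and acyclic $s$-cofibrations for $\sS_{B}$ supplied by Theorem \ref{s_model_structure_param_spectra}. I propose $I_{R} = R \sma I$ and $J_{R} = R \sma J$ as generating sets, and declare a map of $R$-modules to be a weak equivalence (respectively, fibration) precisely when its image under $U$ is a stable equivalence (respectively, $s$-fibration) of parametrized spectra.

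The first step is to check that the small object argument applies. Since $\sS_{B}$ is compactly generated and $R \sma (-)$ is a left adjoint, the domains of the maps in $I_{R}$ and $J_{R}$ are small relative to the corresponding cell complexes; the retract, two-out-of-three, and one half of each lifting axiom come for free from the analogous properties in $\sS_{B}$. The central step is therefore to verify the acyclicity condition: every relative $J_{R}$-cell complex is a stable equivalence of underlying parametrized spectra.

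The key inputs for acyclicity are (a) that each map in $J$ is an $h$-acyclic cofibration, i.e., an $h$-cofibration that is a stable equivalence, because the $s$-model structure on $\sS_{B}$ is well-grounded; (b) that $R$ is well-grounded (each $R(V)$ is well-based and compactly generated), so smashing with $R$ over $B$ preserves $h$-cofibrations; and (c) that smashing with a well-grounded spectrum preserves stable equivalences when applied to well-grounded objects. Combining these, each map in $J_{R}$ is an $h$-cofibration and a stable equivalence, and the glueing and colimit lemmas for well-grounded model categories then imply that pushouts along, and transfinite compositions of, such maps remain stable equivalences. This is precisely the acyclicity condition.

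Finally, once the lifted model structure exists, well-groundedness of $R\Mod_{B}$ transports along $U$: the cylinder, the class of ground cofibrations, compactness of generators, and the glueing lemma all come directly from the corresponding structure on $\sS_{B}$ because $U$ preserves and reflects colimits, weak equivalences, and (by construction of $I_R,J_R$) the relevant $h$-cofibration structure. The principal obstacle is the acyclicity step; it would fail in the naive $q$-style model structure on parametrized spectra, and the entire point of the $qf$-based $s$-model structure is that its generating acyclic cofibrations are $h$-cofibrations, so that smashing with the well-grounded ring spectrum $R$ behaves homotopically as required and the well-grounded machinery of \cite{MS}*{\S 5.3--5.6} finishes the argument.
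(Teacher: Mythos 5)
Your proposal is correct and follows exactly the strategy of the cited reference \cite{MS}*{14.1.7}: transfer the $s$-model structure on $\sS_{B}$ along the free--forgetful adjunction $R \sma (-) \dashv U$, taking $R \sma I$ and $R \sma J$ as generating sets and verifying the acyclicity condition via the well-grounded/ground-cofibration machinery (generating acyclic $s$-cofibrations are $h$-cofibrations and stable equivalences, $R \sma (-)$ preserves these because $R$ is well-grounded, and the glueing and colimit lemmas close the argument). The paper gives no independent proof of this theorem---it simply cites May--Sigurdsson---and your argument accurately reconstructs that reference's proof.
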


If $X$ is a space and $Y$ is a space over $B$, then there is a natural isomorphism of parametrized spectra over $B$
\[ 
\Si_{B}(X \times Y) \cong \Si_{+} X \sma \Si_{B} Y
\]
that satisfies the analogues of the associativity and unit diagrams for a monoidal natural transformation.  Similarly, $\Om_{B}$ preserves the monoidal structure up to a lax monoidal transformation, so that if $G$ is a topological monoid, then the adjunction $(\Si_{B}, \Om_{B})$ restricts to give an adjunction between $G$-spaces over $B$ and $\Si_{+} G$-module spectra over $B$.  

\begin{proposition}\label{prop:sus_is_quillen} \hspace{2in}
\begin{itemize}
\item[(i)]  The adjoint pair $(\Sigma^{\infty}_{B}, \Omega^{\infty}_{B})$ is a Quillen adjunction between the $qf$-model structure on spaces over $B$ and the $s$-model structure on spectra over $B$.
\item[(ii)]  Let $G$ be a topological monoid.  The adjoint pair $(\Sigma^{\infty}_{B}, \Omega^{\infty}_{B})$ is a Quillen adjunction between the $qf$-model structure on $G$-spaces over $B$ and the $s$-model structure on $\Sigma^{\infty}_{+} G$-modules over $B$.
\end{itemize}
\end{proposition}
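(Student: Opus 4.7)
The plan is to prove (i) first by showing that the right adjoint $\Om_B$ preserves $s$-fibrations and $s$-acyclic $s$-fibrations, and then deduce (ii) from (i) by a formal argument. For (ii), the $s$-model structure on $\Si_+ G$-modules over $B$ and the $qf$-model structure on $G$-modules over $B$ are both created by forgetful functors to $\sS_B$ and $\cI\sK/B$, while the adjunction $(\Si_B, \Om_B)$ on underlying categories lifts to one between these module categories via Proposition \ref{Om_respects_tensoring}. Thus, once (i) is established, any (acyclic) $s$-fibration of $\Si_+ G$-modules is sent by $\Om_B$ to an (acyclic) $qf$-fibration of underlying $\cI$-spaces, which is by construction an (acyclic) $qf$-fibration of $G$-modules.

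For (i), the acyclic fibration case is straightforward. The $s$-model structure of Theorem \ref{s_model_structure_param_spectra} is a left Bousfield-type localization of the level $qf$-model structure on $\sS_B$, so the acyclic $s$-fibrations coincide with the acyclic fibrations in the level model structure, i.e., level $q$-equivalences that are level $qf$-fibrations of ex-spaces. For such a $p$, the levelwise Quillen adjunction $(-) \sma_B S^V_B \dashv F_B(S^V_B, -)$ on ex-spaces over $B$ from \cite{MS}*{Ch.~7} ensures that each $F_B(S^V_B, p(V))$ is an acyclic $qf$-fibration of ex-spaces, hence of spaces over $B$ after forgetting sections. Thus $\Om_B p$ is a level acyclic $qf$-fibration of $\cI$-spaces over $B$, which is automatically an acyclic $qf$-fibration in the $qf$-model structure.

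For a general $s$-fibration $p \colon X \arr Y$, I apply Proposition \ref{qf_fibration_description_prop}: $\Om_B p$ is a $qf$-fibration iff it is a level $qf$-fibration and, for each morphism $\phi \colon V \arr W$ in $\cI$, the induced map
\[
(\Om_B X)(V) \arr (\Om_B X)(W) \times_{(\Om_B Y)(W)} (\Om_B Y)(V)
\]
is a $q$-equivalence of spaces over $B$. The level condition again follows from the level $qf$-fibration property of $p$ via the ex-space Quillen adjunction. For the pullback condition, the $\cI$-structure map $\Omega^V_B X(V) \arr \Omega^W_B X(W)$ induced by $\phi$ is obtained by applying $\Omega^V_B$ to the adjoint structure map $\widetilde{\sigma} \colon X(V) \arr \Omega^{W-V}_B X(W)$ of $X$. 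Since $\Omega^V_B$ preserves pullbacks, the displayed map is $\Omega^V_B$ applied to the map $X(V) \arr \Omega^{W-V}_B X(W) \times_{\Omega^{W-V}_B Y(W)} Y(V)$. The characterization of $s$-fibrations in \cite{MS}*{\S12.3} says exactly that these adjoint-structure squares of $p$ are homotopy pullback squares of ex-spaces, i.e., the latter map is a $q$-equivalence; since $\Omega^V_B$ preserves $q$-equivalences between sufficiently fibrant objects, applying it yields the required $q$-equivalence.

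The delicate step is the last one: one must carefully identify the $\cI$-space structure maps of $\Om_B X$ with the orthogonal-spectrum adjoint structure maps of $X$, and transport the homotopy pullback condition characterizing $s$-fibrations through the right Quillen functor $\Omega^V_B$ on ex-spaces, so as to match precisely the pullback condition of Proposition \ref{qf_fibration_description_prop}. The remaining ingredients---formal passage from underlying categories to modules and the acyclic-fibration argument---are routine once this identification is in hand.
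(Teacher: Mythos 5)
Your overall plan parallels the paper's: show $\Om_{B}$ preserves (acyclic) $s$-fibrations, deduce (ii) formally since module structures are created in the underlying categories, and for the fibration case invoke Proposition \ref{qf_fibration_description_prop} together with the homotopy-pullback characterization of $s$-fibrations. The acyclic-fibration case and part (ii) are handled the same way as in the paper, and the identification of the $\cI$-structure maps of $\Om_B X$ with $\Omega^V_B$ applied to the adjoint structure maps of $X$ is correct.

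However, the step you flag as delicate is a genuine gap as written. You want to conclude that $\Omega^V_B$ carries the $q$-equivalence $X(V) \arr \Omega^{W'}_B X(V\oplus W') \times_{\Omega^{W'}_B Y(V\oplus W')} Y(V)$ to a $q$-equivalence, invoking that $\Omega^V_B$ preserves $q$-equivalences ``between sufficiently fibrant objects''. But for a general $s$-fibration $p\colon X\arr Y$ there is no reason for $X(V)$, $Y(V)$, or the displayed pullback to be $qf$-fibrant ex-spaces; the $s$-fibration condition controls the maps $p(V)$ and $\widetilde{\sigma}$, not the projections to $B$. So Ken Brown's lemma does not apply directly, and the right Quillen property alone does not give the preservation you need.

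The paper avoids this by comparing the two objects \emph{over $Y(V)$} rather than over $B$, using that $qf$-fibrations of spaces over $B$ are closed under pullback and are quasifibrations (\cite{MS}*{6.5.1}). This yields long exact sequences of homotopy groups for $p(V)$, for the pulled-back map, and for their images under $\Omega^V_B$; a five-lemma comparison then gives the required $q$-equivalence without any hypothesis on the fibrancy of $X(V)$ or $Y(V)$. Alternatively, you could repair your version by appealing to the standard refinement of the Quillen-adjunction criterion for a Bousfield localization: it suffices to check that $\Om_B$ preserves acyclic fibrations (which you do) and fibrations between fibrant objects. Restricting to $s$-fibrant $Y$ forces $X$ to be $s$-fibrant as well, so $X(V)$, $Y(V)$ are level $qf$-fibrant and the Ken Brown argument then applies. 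Either fix closes the gap; as it stands, the argument asserts but does not establish the fibrancy needed for the last step.
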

\begin{proof}
In both cases, this follows by examining the effect of $\Si_{B}$ on generating cofibrations and acyclic cofibrations; since the $s$-model structure on $\sS_{B}$ is a left Bousfield localization of the level $qf$-model structure, its generating sets contain all maps of the form $\Sigma^{\infty}_{B} i$, where $i$ runs through the generating sets for the $qf$-model structure on $\sK/B$.
\end{proof}

It is a formal consequence that the left Quillen functor $\Si_{B}$ preserves weak equivalences between cofibrant objects.  However, it will be useful to know that a stronger result is true.

\begin{lemma}\label{suspension_preserves_we_lemma}
The functor $\Si_{B} \colon \sK/B \arr \sS_{B}$ preserves all weak equivalences.
\end{lemma}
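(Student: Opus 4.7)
The plan is to derive both claims from the principle that the stable homotopy of the suspension spectrum of an $\cI$-space depends only on its homotopy colimit. This allows us to upgrade the known left Quillen property (which by Ken Brown only guarantees preservation of weak equivalences between cofibrant objects) to preservation of all $q$-equivalences.

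First I would handle the non-parametrized functor $\Si_{+} \colon \cI\sK \arr \sS$. The essential input is the natural isomorphism
\[
\pi_{n} \Si_{+} X \cong \pi_{n}^{S}(X_{h \cI})_{+}.
\]
To obtain this, one writes $\pi_{n} \Si_{+} X$ as the colimit along the cofinal subcategory $\cJ \subset \cI$ of $\pi_{n + V}\bigl( X(V)_{+} \sma S^{V} \bigr)$; since adding a disjoint basepoint yields a well-based space, these agree in the stable range with $\pi_{n}^{S}(X(V)_{+})$. Commuting stable homotopy with the filtered colimit and invoking the chain of natural equivalences $\hocolim_{\cI} X \simeq \hocolim_{\cJ} X \simeq \colim_{\cJ} X$ from Lemma \ref{nat_trans_equiv_for_cofibrants} (which is valid for all well-grounded $X$, the usual setting for $\cI \sK$) identifies the colimit with $\pi_{n}^{S}(X_{h \cI})_{+}$. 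A $q$-equivalence $f \colon X \arr Y$ of $\cI$-spaces induces by definition a weak homotopy equivalence $X_{h\cI} \arr Y_{h\cI}$, so $\Si_{+} f$ is a $\pi_*$-isomorphism, i.e., a stable equivalence.

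Next I would reduce the parametrized case to the non-parametrized case by working on fibers. The definition gives a canonical isomorphism $(\Si_{B} Y)_{b} \cong \Si_{+} Y_{b}$ for every $b \in B$. Stable equivalences of parametrized spectra are detected on homotopy fibers, so it suffices to show that a $q$-equivalence $f \colon X \arr Y$ of $\cI$-spaces over $B$ induces $q$-equivalences of $\cI$-spaces on every homotopy fiber. For this I would apply $qf$-fibrant replacement to both sides; by Proposition \ref{qf_fibration_description_prop} a $qf$-fibrant $\cI$-space is a quasifibration of $\cI$-spaces over $B$, so fibers agree up to $q$-equivalence with homotopy fibers. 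A $q$-equivalence between $qf$-fibrant $\cI$-spaces over $B$ then restricts to a $q$-equivalence of fibers (via a five-lemma argument on the long exact sequences of the associated quasifibrations of $\cI$-spaces), so Step 1 yields a stable equivalence $\Si_{+} X_{b} \arr \Si_{+} Y_{b}$ on each homotopy fiber of $\Si_{B} f$.

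The main obstacle is establishing the hocolim formula $\pi_{n} \Si_{+} X \cong \pi_{n}^{S}(X_{h \cI})_{+}$ without additional cofibrancy hypotheses on $X$. All of the natural transformations involved (the lax monoidal structure on $(-)_{h \cI}$, the comparison between the hocolim and the colim, and the exchange with stable homotopy) require well-pointed or well-grounded input to behave homotopically. Using the well-grounded structure on $\cI \sK$ established in \cite{diagram_spaces} and the observation that $X(V)_{+}$ is automatically well-based for every $\cI$-space $X$ resolves this. Once the formula is secured, both halves of the statement follow formally.
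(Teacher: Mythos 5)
Your overall plan (derive the preservation of all weak equivalences from the fact that $\pi_*\Sigma^{\bullet}_{+}X$ depends only on $X_{h\cI}$, then reduce the parametrized case to the absolute one by working with homotopy fibers) is the same strategy the paper uses. However, your justification of the absolute formula has a genuine gap.

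The gap is the assertion that the chain of natural equivalences
\[
\hocolim_{\cI}X \simeq \hocolim_{\cJ}X \simeq \colim_{\cJ}X
\]
from Lemma~\ref{nat_trans_equiv_for_cofibrants} ``is valid for all well-grounded $X$.'' That lemma is stated, and cited, only for $q$-cofibrant $\cI$-spaces, and the inner comparison $\hocolim_{\cJ}X\arr\colim_{\cJ}X$ genuinely requires cofibrancy: for an arbitrary $\cI$-space the structure maps $X(V)\arr X(W)$ are arbitrary continuous maps, and the projection from the telescope to the strict sequential colimit need not be a weak equivalence. Well-groundedness of the model category $\cI\sK$ is a statement about the generating (acyclic) cofibrations; it does not make every object behave like a cofibrant one. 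You also lean on this when you say ``commuting stable homotopy with the filtered colimit'': stable homotopy commutes with sequential \emph{homotopy} colimits (telescopes) of well-based spaces, not with arbitrary topological colimits, so the appeal to $\colim_{\cJ}X$ cannot be made to carry that weight. The fix is to never pass through the strict colimit at all: from the definition of $\pi_n$ of an orthogonal spectrum one gets $\pi_n\Sigma^{\bullet}_{+}X = \colim_m \pi^S_n(X(\bR^m)_+)$ as a sequential colimit of abelian groups, and since each $X(\bR^m)_+$ is well-based, $\colim_m \pi^S_n(X(\bR^m)_+) \cong \pi^S_n(\hocolim_m X(\bR^m))_+ \cong \pi^S_n(X_{h\cI})_+$, where the last isomorphism is homotopy cofinality of $\bbN\subset\cJ\subset\cI$. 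That chain needs no cofibrancy hypothesis, which is the whole point of the lemma. The paper's own proof avoids the issue by working entirely with homotopy colimits of the spaces $\Omega^n\Sigma^n_+X(m)$, never invoking a strict colimit over $\cJ$.

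On the parametrized half, your reduction is essentially the one in the paper, with $qf$-fibrant replacement playing the role of level-wise ex-quasifibrant approximation. One point you gloss over: after replacing $f$ by a map $f'\colon X'\arr Y'$ of $qf$-fibrant $\cI$-spaces over $B$, you still have to know that the honest fibers of $\Sigma^{\bullet}_B X'$ and $\Sigma^{\bullet}_B Y'$ actually compute their homotopy groups, i.e.\ that these parametrized spectra are level-wise ex-quasifibrant. This is not automatic since $\Sigma^{\bullet}_B$ is left Quillen and therefore does not preserve $qf$-fibrancy; one needs the separate fact that level $qf$-fibrations are quasifibrations and that $\Sigma^V_B$ preserves ex-quasifibrant objects, which is \cite{MS}*{8.5.3.(iii)}, the ingredient the paper cites and that you should as well. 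With those two repairs your argument agrees in substance with the paper's.
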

\begin{proof}
If $f \colon X \arr Y$ is a weak homotopy equivalence of spaces over $B$, then each map of ex-spaces $f_+ \sma_{B} S^{V}_{B}$ is a weak homotopy equivalence on total spaces.  This means that $\Sigma^{\infty}_{B} f$ is a level-wise weak homotopy equivalence and thus a stable equivalence of parametrized spectra by \cite{MS}*{12.3.5}.
\end{proof}

We will work in the non-parametrized setting for a moment in order to fix notation on some constructions.  Suppose that $R$ and $A$ are ring spectra.  Consider the function spectrum $F^{R}(-, -)$ of $R$-modules.  If $P$ is an $A$-module, $M$ is an $(R, A)$-bimodule and $N$ is an $R$-module, then $F^{R}(M, N)$ is an $A$-module and we have the following adjunction:
\addtocounter{theorem}{1}
\begin{equation}\label{non_param_function_spectrum_adjunction}
\Mod_{R}(M \sma_{A} P, N) \cong \Mod_{A}(P, F^{R}(M, N)).
\end{equation}
It is a consequence of the fact that the category of $R$-modules is a spectrally enriched model category via the function spectra $F^{R}(-,-)$ that if $M'$ is a cofibrant $R$-module, then the functor $F^{R}(M', -)$ preserves stable equivalences between fibrant $R$-modules, and similarly if $N$ is a fibrant $R$-module, then the functor $F^{R}(-, N)$ preserves stable equivalences between cofibrant $R$-modules.

We will be interested in the generalization of the adjunction \eqref{non_param_function_spectrum_adjunction} where $N$ and $P$ are parametrized spectra.  The smash product $M \sma_{A} P$ occurring in the parametrized version of the adjunction is built out of the external smash product $\sma \colon \sS \times \sS_{B} \arr \sS_{B}$, as described in \cite{MS}*{\S14.1}.  In particular, there is never a need to internalize the smash product by taking the pullback $\Delta^*$ of a spectrum over $B \times B$ along the diagonal map.  In this situation, we are able to maintain homotopical control of the smash product.

\begin{lemma}\label{half_parametrized_monoid_axiom} 
Let $i \colon X \arr Y$ be an $s$-cofibration of $R$-modules and let $j \colon Z \arr W$ be an $s$-cofibration of spectra over $B$.  Then the pushout product 
\[
i \boxempty j \colon (Y \sma Z) \cup_{X \sma Z} (X \sma W) \arr Y \sma W
\]
is an $s$-cofibration of $R$-modules over $B$ which is a stable equivalence if either $i$ or $j$ is.
\end{lemma}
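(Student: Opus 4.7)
The plan is to reduce the pushout-product claim to its external analogue for the functor $\overline{\sma}\colon \sS \times \sS_{B} \arr \sS_{B}$ of May--Sigurdsson and then to use the coequalizer presentation
\[
M \overline{\sma} A \overline{\sma} P \rightrightarrows M \overline{\sma} P \arr M \sma_{A} P
\]
to transport the statement from the external smash product to the relative smash product over $A$.

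The first step is the external pushout-product axiom: for an $s$-cofibration $i'$ of orthogonal spectra and an $s$-cofibration $j$ of spectra over $B$, the external pushout product $i' \boxempty j$ is an $s$-cofibration of spectra over $B$, and is a stable equivalence if either factor is. This is part of the monoidal model-categorical package assembled by May--Sigurdsson for $\sS_{B}$: it is verified on generating (acyclic) sets, where both sides of the pushout product are ``free'' level-wise, reducing the question to the pushout product of a generating $q$-cofibration of spaces with a generating $qf$-cofibration of ex-spaces over $B$. The latter is available from \cite{MS}*{\S12.6}.

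The second step passes from the external smash product to the relative smash product $\sma_{A}$. Using that $A$ and $R$ are $s$-cofibrant, the generating $s$-cofibrations of $(R,A)$-bimodules may be chosen of the free form $R \sma I^{\mathrm{sp}} \sma A$, where $I^{\mathrm{sp}}$ is a generating $s$-cofibration of orthogonal spectra; similarly, generating $s$-cofibrations of $A$-modules over $B$ may be chosen of the free form $A \sma J^{B}$ for $J^{B}$ a generating $s$-cofibration of parametrized spectra over $B$. On such free cells the relative smash product over $A$ cancels the internal copies of $A$, yielding a natural isomorphism
\[
(R \sma I^{\mathrm{sp}} \sma A) \sma_{A} (A \sma J^{B}) \cong R \sma (I^{\mathrm{sp}} \overline{\sma} J^{B}),
\]
so the pushout product of two generating $s$-cofibrations reduces to the external pushout product of the previous step, twisted by the left $R$-action.

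The main obstacle is propagating the statement from generating cells to arbitrary $s$-cofibrations, and this is precisely where the well-grounded structure of the relevant model categories does the work. The key points are that $s$-cofibrations of parametrized spectra are level $h$-cofibrations in the $qf$-sense, and that stable equivalences of parametrized spectra are preserved by pushouts along $h$-cofibrations and by transfinite composition \cite{MS}*{\S5.3--\S5.4, \S12.6}. A transfinite cell induction built from these glueing and colimit properties then closes the argument, with the acyclicity half of the conclusion using additionally that smashing a stably acyclic cofibration of $(R,A)$-bimodules with a cofibrant $A$-module over $B$ yields a stable equivalence, which is the parametrized analogue of Lemma \ref{non_param_invariance_prop}.
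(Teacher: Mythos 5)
Your proposal follows essentially the same route as the paper: reduce to generating (acyclic) cofibrations via the well-grounded cellular induction, cancel the free copies of $A$ against $\sma_A$, and land on the external pushout-product axiom for $\overline{\sma}\colon\sS\times\sS_B\to\sS_B$ from \cite{MS}*{12.6.5}. However, there is a genuine slip in the key isomorphism. For a generating bimodule cell $R\sma I^{\mathrm{sp}}\sma A$ and a generating $A$-module cell $A\sma J^B$, only \emph{one} copy of $A$ cancels:
\[
(R\sma I^{\mathrm{sp}}\sma A)\sma_A(A\sma J^B)\ \cong\ R\sma I^{\mathrm{sp}}\sma A\,\overline{\sma}\,J^B,
\]
since $(M'\sma A)\sma_A N\cong M'\sma N$ leaves the $A$ inside the free $A$-module $A\sma J^B$ intact; your claimed identification $\cong R\sma(I^{\mathrm{sp}}\overline{\sma}J^B)$ drops it. This matters because the residual $A$ is precisely where the $s$-cofibrancy hypothesis on $A$ does its work: one needs $A\sma(-)$ to preserve (acyclic) $s$-cofibrations of spectra over $B$, and $R\sma(-)$ to carry them to (acyclic) $s$-cofibrations of $R$-modules over $B$. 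As written, you invoke cofibrancy of $A$ and $R$ only to describe the shape of the generating sets, which is true for any free-forgetful adjunction and does not use cofibrancy at all; the actual use of cofibrancy occurs after the cancellation.

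A related imprecision occurs in your handling of the acyclicity clause. You appeal to a ``parametrized analogue'' of Lemma \ref{non_param_invariance_prop}, namely that smashing a stably acyclic cofibration of bimodules with a cofibrant $A$-module over $B$ yields a stable equivalence. But that statement is the special case $j\colon\emptyset\to W$ of the very lemma being proved, so it cannot be used as an ingredient. The acyclicity half follows instead by running the identical cellular induction with the \emph{generating acyclic} cofibrations of bimodules (resp.\ of $A$-modules over $B$) in place of the generating cofibrations; after cancellation the base case is again \cite{MS}*{12.6.5} together with the fact that $A\sma(-)$ and $R\sma(-)$ preserve acyclic $s$-cofibrations when $A$ and $R$ are $s$-cofibrant. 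With these two corrections your argument coincides with the paper's.
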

\begin{proof}
Using the fact that parametrized spectra and $R$-modules are well-grounded categories, we may induct up the cellular filtration of $i$ and $j$, so it suffices to verify the result when $i$ and $j$ are generating cofibrations or generating acyclic cofibrations.  This follows from the case when $R = S$ \cite{MS}*{12.6.5} because $R \sma (-)$ takes $s$-cofibrations and acyclic $s$-cofibrations of spectra over $B$ to $s$-cofibrations and acyclic $s$-cofibrations of $R$-modules over $B$. 
\end{proof}

The lemma has the following consequence.

\begin{proposition}\label{prop:param_quillen_adjunction}  
Suppose that $M$ is an $(R, A)$-bimodule that is cofibrant as an $R$-module.  Then the adjunction
\[
\xymatrix{
(\text{$A$-modules over $B$}) \ar@<.5ex>[rrr]^-{M \sma_{A} (-) } & & & (\text{$R$-modules over $B$}) \ar@<.5ex>[lll]^-{F^{R}(M, -)}
}
\]
is a Quillen adjunction.
\end{proposition}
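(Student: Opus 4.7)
The plan is to verify the two Quillen conditions on the left adjoint: that $M \sma_{A} (-)$ carries $s$-cofibrations of $A$-modules over $B$ to $s$-cofibrations of $R$-modules over $B$, and acyclic $s$-cofibrations to acyclic $s$-cofibrations. Both statements will be extracted as a degenerate instance of the pushout-product estimate already established in Lemma \ref{half_parametrized_monoid_axiom}.

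First I would specialize the pushout-product map of Lemma \ref{half_parametrized_monoid_axiom} to the case $i \colon \ast \arr M$, the unique map from the initial $(R,A)$-bimodule to $M$. Because $M$ is cofibrant as an $(R,A)$-bimodule, $i$ is an $s$-cofibration, so the hypothesis on $i$ in the lemma is satisfied. Given an $s$-cofibration $j \colon Z \arr W$ of $A$-modules over $B$, the domain of $i \boxempty j$ is the pushout
\[
(M \sma_{A} Z) \cup_{\ast \sma_{A} Z} (\ast \sma_{A} W),
\]
which reduces to $M \sma_{A} Z$ once one identifies $\ast \sma_{A} (-)$ with the initial $R$-module over $B$. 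Thus $i \boxempty j$ becomes exactly the map $M \sma_{A} j$, and Lemma \ref{half_parametrized_monoid_axiom} guarantees that it is an $s$-cofibration, which is moreover acyclic whenever $j$ is.

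The only subtlety is checking that $\ast \sma_{A} Z$ really is initial in $R\Mod_{B}$. This should follow from the construction of $\sma_{A}$ via the external smash product $\overline{\sma} \colon \sS \times \sS_{B} \arr \sS_{B}$ together with a coequalizer over $A$: since $\ast \overline{\sma} Z$ is level-wise given by smashing the base space (with its section) against $Z$, which produces the initial ex-space, the coequalizer over $A$ inherits the same universal property. A one-line check suffices to confirm this, and it is the only non-formal ingredient.

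The main obstacle, such as it is, is purely notational: making sure that the various flavors of smash product ($\sma_A$, $\overline{\sma}$, fiberwise $\sma_B$) are correctly identified so that the pushout-product computation is really the map $M \sma_{A} j$. Once that is in place, the proposition is an immediate corollary of Lemma \ref{half_parametrized_monoid_axiom}, with no further homotopical input required.
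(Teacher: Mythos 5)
Your proposal is correct and fills in exactly the argument the paper leaves implicit when it states the proposition as a ``consequence'' of Lemma~\ref{half_parametrized_monoid_axiom}: specializing the pushout product to $i\colon \ast \arr M$ (a cofibration because $M$ is cofibrant) and using that $\ast \sma_A (-)$ is the zero $R$-module over $B$ identifies $i \boxempty j$ with $M \sma_A j$, from which preservation of (acyclic) $s$-cofibrations follows. The identification of $\ast \sma_A Z$ with the zero object is the one minor point you flag, and your sketch via the external smash product and the coequalizer over $A$ is the right way to see it.
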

\begin{proof}
It follows from the lemma that the adjunction is Quillen when $A = S$.  In particular, the functor $F^{R}(M, -)$ is right Quillen when we consider its codomain to be parametrized spectra.  The general case then holds as well because $s$-fibrations and weak equivalences of $A$-modules over $B$ are created by the forgetful functor to parametrized spectra.  
\end{proof}

\section{The principal $\Aut_{R}M$-fibration associated to an $R$-bundle}\label{section:prep}

Let $R$ be a ring spectrum and let $M$ be an $R$-module.  In this section, we will define the topological monoid $\Aut_{R}M$ of autoequivalences of $R$-modules $M \arr M$.  We then describe the construction of an $\Aut_{R}M$-torsor from an $R$-bundle with fiber $M$.  

Suppose that $G$ is a topological monoid.  While $G$ may not be grouplike, there is a maximal grouplike submonoid $G^{\x} \subset G$ defined as the pullback
\begin{equation}\label{diagram:def_units}
\xymatrix{
G^{\times} \ar[r] \ar[d] & G \ar[d] \\
(\pi_0 G)^{\x} \ar[r] & \pi_0 G }
\end{equation}
where $(\pi_0 G)^{\times} \subset \pi_0 G$ is the subset of invertible elements of the monoid $\pi_0 G$.  In other words, the inclusion $G^{\times} \arr G$ is given by the inclusion of those path components that are invertible under the monoid multiplication.  For example, if $G = \Om R = R(0)$ is the mulitplicative topological monoid underlying an $s$-fibrant ring spectrum $R$, then $G^{\times} = \mGL_1R$ is the space of units of $R$.  A more delicate construction is required if $R$ is commutative and one wants to keep control of the resulting $E_{\infty}$-space structure on $\mGL_1R$ \citelist{ \cite{diagram_spaces} \cite{schlichtkrull_units} \cite{SS_groupcompletion}}, but we will not need this for our purposes.

We assume for the rest of the section that $R$ is an $s$-cofibrant ring spectrum and that $M$ is an $s$-fibrant and $s$-cofibrant $R$-module.  The function spectrum $F^{R}(M, M)$ is a ring spectrum under composition of maps and our assumptions guarantee that it is $s$-fibrant.  Let 
\[
\End_{R}M = \Om F^{R}(M, M) = F^{R}(M, M)(0)
\]
be the underlying topological monoid.  We define $\Aut_{R} M$ to be the units of the ring spectrum $F^{R}(M, M)$:
\[
\Aut_{R} M = \mGL_{1} F^{R}(M, M) = (\Omega^{\infty} F^{R}(M, M))^{\times}
\]
We think of $\Aut_{R} M$ as the space of weak equivalences of $R$-modules $M \arr M$, with monoid multiplication given by composition.  The suspension spectrum of the monoid $\Aut_{R} M$ is a ring spectrum $\Si_{+} \Aut_{R} M$.  The $R$-module $M$ also has the structure of a right $\Si_{+} \Aut_{R} M$-module, with action map
\[
M \sma_{S} \Si_{+} \Aut_{R} M \arr M
\]
the adjoint of the composite map of ring spectra
\[
\Si_+ \Aut_{R} M \arr \Si_{+} \Omega^{\infty} F^{R}(M, M) \overset{\epsilon}{\arr} F^{R}(M, M)
\]
induced by the canonical inclusion $\mGL_1 \arr \Om$ and the counit of the adjunction $(\Si_{+}, \Om$).  Thus $M$ is a $(R, \Si_{+} \Aut_{R} M)$-bimodule.  




We write $\bF_{b} = \bbR i_{b}^* (-)$ for the right derived fiber functor.  If $N$ is an $R$-module over $B$, the derived fiber $\bF_{b} R$ is the object of the homotopy category of $R$-modules determined by the fiber $i_{b}^* R^{s}N$ of an $s$-fibrant approximation of $N$ as an $R$-module over $B$.

\begin{definition}\label{def:Rbundle}
An $R$-bundle over $B$ with fiber $M$ is an $R$-module $N$ over $B$ such that every derived fiber $\bF_{b}N$ of $N$ admits a zig-zag of stable equivalences of $R$-modules to $M$.  
\end{definition}

Let $N$ be an $R$-bundle over $B$.  The function spectrum $F^{R}(M, N)$ is a $\Si_{+}\End_{R} M$-module over $B$.  Applying $\Omega^{\infty}_{B}$, we get a $\End_{R}M$-space $\Omega^{\infty}_{B}F^{R}(M, N)$ over $B$ which is $qf$-fibrant when $N$ is $s$-fibrant.  The following lemma allows us to keep control of its fiber homotopy type.  It is a direct consequence of the cofibrancy of $M$ as an $R$-module.

\begin{lemma}\label{choice_of_equiv_lemma}  Suppose that $N$ is $s$-fibrant and fix a point $b \in B$.  A stable equivalence of $R$-modules $N_{b} \simeq M$ determines:
\begin{itemize}
\item[(i)] a stable equivalence of $\Si_{+}\End_{R}M$-modules $F^{R}(M, N)_{b} \simeq F^{R}(M, M)$, and
\item[(ii)] a $q$-equivalence of $\End_{R}M$-spaces $\Om_{B} F^{R}(M, N)_{b} \simeq \Om F^{R}(M, M).$
\end{itemize}
\end{lemma}



\noindent Notice that the second condition in the lemma implies that $\Om_{B} F^{R}(M, N)$ is an $\End_{R}M$-torsor.  We will now construct an $\Aut_{R}M$-torsor 
\[
E^{R}(M, N) \subset\Om_{B} F^{R}(M, N).
\]
The idea of the construction is to restrict to the subspace whose fiber over $b \in B$ consists of the stable equivalences of $R$-modules $M \arr N_{b}$.  To make this idea rigorous, we need to access the components $\pi_0 \Om_{B} F^{R}(M, N)_{b}$ of each fiber in a way that remembers the topology of $B$.  

To this end, we define the parametrized components $\pi_0^{B} X$ of a parametrized space $p \colon X \arr B$.  As a set, $\pi_0^{B} X$ consists of all components of all fibers of $X$:
\[
\pi_0^{B} X = \bigcup_{b \in B} \pi_0 X_b.
\]
Give $\pi_0^{B} X$ the quotient topology induced by the map $X \arr \pi_0^{B} X$ that sends a point $x \in X$ to its component $[x] \in \pi_0 X_{p(x)}$.  Since the quotient map is a map over $B$, the space $\pi_0^{B} X$ is a parametrized space over $B$.

\begin{construction}\label{def:subtorsor}  We now define a fiberwise version of \eqref{diagram:def_units}.  Let $G$ be a topological monoid and let $(Y, p)$ be a $G$-torsor over $B$ whose structure map $p \colon Y \arr B$ is a quasifibration.  A choice of $q$-equivalence of $G$-spaces $Y_{b} \simeq G$ gives an isomorphism of $\pi_0 G$-spaces $\pi_0 Y_{b} \cong \pi_0 G$.  Define $\pi_0 Y_b^{\x}$ to be the subset of $\pi_0 Y_b$ corresponding to $\pi_0 G^{\x}$ under this isomorphism.  Although the isomorphism $\pi_0 Y_b^{\x} \cong \pi_0 G^{\x}$ of $\pi_0 G^{\x}$-spaces depends on the choice of $q$-equivalence $Y_{b} \simeq G$, the subset $\pi_0 Y_{b}^{\x}$ does not.  Let $\pi_0^{B} Y^{\x} \subset \pi_0^{B} Y$ be the subspace consisting of the sets $\pi_0 Y_{b}^{\x}$ in each fiber.  Define the space $Y^{\x}$ over $B$ to be the following pullback:
\[
\xymatrix{
Y^{\x} \ar[r]^{\iota} \ar[d] & Y \ar[d] \\
\pi_0^{B} Y^{\x} \ar[r] & \pi_0^{B} Y }
\]
Notice that there is a canonical isomorphism $(Y^{\times})_{b} \cong Y_{b}^{\times}$, and that a map $X \arr Y$ of spaces over $B$ factors through $Y^{\times}$ if and only if for every $b \in B$, the induced map $\pi_0 X_b \arr \pi_0 Y_b$ has image lying in $\pi_0 Y_{b}^{\times}$.

 It is straightforward to verify that the construction $Y \mapsto Y^{\times}$ is functorial for maps of $G$-spaces.  We will at times write $\mu = (-)^{\times}$ for the resulting functor.  The assumption that $p$ is a quasifibration is the minimal hypothesis necessary for the construction to be possible.  In practice, $p$ will be either a $qf$-fibration or an $h$-fibration.  
\end{construction}

\begin{lemma}\label{lemma:subtorsor_path_comp}  Suppose that the base space $B$ is semi-locally contractible and that $(Y, p)$ is a principal $G$-fibration over $B$.  Then $\iota \colon Y^{\times} \arr Y$ is the inclusion of a subspace of path components.
\end{lemma}
\begin{proof}
Let $\gamma$ be a path in $Y$ with $\gamma(0) \in Y^{\times}$.  Assuming that $\gamma(1) \notin Y^{\times}$, let $t_0 = \inf \{ t \in [0, 1] \mid \gamma(t) \notin Y^{\times} \}$.  Set $b_0 = p(\gamma(t_0))$ and choose an open neighborhood $U$ of $b_0$ along with a nullhomotopy $h \colon U \times I \arr B$ of $U$ in $B$.  Consider the $G$-space $h^*Y$ over $U \times I$ obtained from $Y$ by pullback along $h$.  The restriction $h^*Y \vert_{U \times \{0\}}$ is isomorphic to $Y \vert_{U}$, while the restriction $h^*Y \vert_{U \times\{1\}}$ is isomorphic to $U \times Y_{b_{0}}$.  It follows that we may find a fiberwise homotopy equivalence of $G$-spaces $\rho \colon Y \vert_{U} \arr U \times Y_{b_{0}}$ over $U$.  Applying the functor $(-)^{\times}$ to $\rho$, we have a commutative diagram
\[
\xymatrix{
Y^{\times}\vert_{U} \ar[r]^-{\rho^{\times}} \ar[d] & U \times Y_{b_0}^{\times} \ar[d] \\
Y \vert_{U} \ar[r]^-{\rho}          & U \times Y_{b_0}
}
\]
which shows that in a neighborhood of $t_0$, the path $\rho \circ \gamma$ must lie in $U \times Y_{b_0}^{\times}$.  Since $\rho$ is a fiberwise homotopy equivalence, it follows that $\gamma(t) \in Y_{p(\gamma(t))}^{\times}$ for $t$ near $t_0$, contradicting our initial assumption.
\end{proof}

\begin{proposition}\label{prop:subtorsor_properties}
Suppose that $(Y, p)$ is a $G$-torsor over a semi-locally contractible space $B$.
\begin{itemize}
\item[(i)] The space $Y^{\times}$ is a $G^{\times}$-space over $B$ and the canonical inclusion $\iota \colon Y^{\times} \arr Y$ is a map of $G^{\times}$-spaces.
\item[(ii)] If the structure map $p \colon Y \arr B$ is an $h$-fibration of $G$-spaces, so that $Y$ is a principal $G$-fibration, then $p^{\times} \colon Y^{\times} \arr B$ is a $G^{\times}$-torsor
\item[(iii)] The functor $\mu \colon Y \mapsto Y^{\times}$ preserves $q$-equivalences between principal $G$-fibrations.
\end{itemize}
\end{proposition}
\begin{proof}
Claim (i) is immediate from the definitions.  For (ii), observe that by Lemma \ref{lemma:subtorsor_path_comp}, $p^{\times}$ is an $h$-fibration of spaces.  It follows that the natural map $Y^{\times}_{b} \arr F_{b}Y^{\times}$ from the fiber to the homotopy fiber is a $q$-equivalence.  A given chain of $q$-equivalences of $G$-spaces $Y_{b} \simeq G$ induces a chain of $q$-equivalences of $G^{\times}$-spaces $Y_{b}^{\times} \simeq G^{\times}$, so we conclude that $Y^{\times}$ is a $G^{\times}$-torsor.


For (iii), assume that $(Y, p) \arr (Z, q)$ is a $q$-equivalence of $G$-torsors with $p$ and $q$ both $h$-fibrations of $G$-spaces.  For any $b \in B$, the induced map of fibers $Y_{b} \arr Z_{b}$ is a $q$-equivalence of $G$-spaces, and so the induced map of $G^{\times}$-spaces $Y^{\times}_{b} \arr Z^{\times}_{b}$ is a $q$-equivalence.  Since $p^{\times}$ and $q^{\times}$ are $h$-fibrations, it follows that $Y^{\times} \arr Z^{\times}$ is a $q$-equivalence on total spaces. 
\end{proof}


As a consequence of Proposition \ref{prop:subtorsor_properties}, we may define the derived functor of $\mu$ to be the functor from the homotopy category of $G$-torsors to the homotopy category of $G^{\times}$-torsors 
\begin{align*}
\boldsymbol{\mu} \colon \ho (G \Tor/B) &\arr  \ho (G^{\times} \Tor/B) \\ 
Y &\longmapsto \mu(\Gamma Y) = (\Gamma Y)^{\times},
\end{align*} 
where $\Gamma$ is the $h$-fibrant approximation functor from \S\ref{sec:new_principal_fibrations}.  Lemma \ref{lemma:subtorsor_path_comp} implies that when $p \colon Y \arr B$ is an $h$-fibration, the fiber $Y_{b}^{\times} \cong (Y^{\times})_{b}$ represents the derived fiber $\bF_{b} Y^{\times}$ of $Y^{\times}$.  In other words: 
\begin{lemma}\label{lemma:mu_commutes}
There is a canonical isomorphism of derived functors $\bF_{b} \boldsymbol{\mu} \cong \boldsymbol{\mu} \bF_{b}$.
\end{lemma}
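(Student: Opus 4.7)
The plan is to show that both composites $\bF_{b}\boldsymbol{\mu}$ and $\boldsymbol{\mu}\bF_{b}$ are canonically represented by the same explicit object, namely $((\Gamma Y)_{b})^{\times}$, which equals $((\Gamma Y)^{\times})_{b}$ under the canonical isomorphism supplied by Construction \ref{def:subtorsor}. The natural isomorphism $\bF_{b}\boldsymbol{\mu} \cong \boldsymbol{\mu}\bF_{b}$ is then assembled from the two chains of natural comparison maps used to arrive at this common representative.

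For the composite $\bF_{b}\boldsymbol{\mu}$, I would simply unwind definitions: $\boldsymbol{\mu} Y = (\Gamma Y)^{\times}$, and by (the proof of) Proposition \ref{prop:subtorsor_properties}.(ii) together with Lemma \ref{lemma:subtorsor_path_comp}, the structure map of $(\Gamma Y)^{\times}$ to $B$ is a level-wise $h$-fibration, hence a quasifibration of $\cI$-spaces. The remark immediately preceding the statement of the lemma then identifies the derived fiber with the actual fiber, giving $\bF_{b}\boldsymbol{\mu} Y \cong ((\Gamma Y)^{\times})_{b} \cong ((\Gamma Y)_{b})^{\times}$.

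For $\boldsymbol{\mu}\bF_{b}$, the only subtlety is that $\bF_{b}Y$ is an object of $\ho(G \Tor/\ast)$, so I need a convenient representative before applying $\boldsymbol{\mu}$. Since $\eta \colon Y \to \Gamma Y$ is a $q$-equivalence of $G$-modules over $B$ (Proposition \ref{gamma_properties}.(iii)) and $\Gamma Y$ is a quasifibration of $\cI$-spaces, the commutative diagram \eqref{diagram:justify} applied to $\Gamma Y$ together with a $qf$-fibrant approximation shows that $(\Gamma Y)_{b}$ represents $\bF_{b}\Gamma Y \simeq \bF_{b}Y$ in $\ho(G\Mod)$. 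Over a point every projection is tautologically an $h$-fibration, so $(\Gamma Y)_{b}$ is itself a principal $G$-fibration, and Propositions \ref{gamma_properties}.(iii) and \ref{prop:subtorsor_properties}.(iii) together yield $\boldsymbol{\mu}\bF_{b}Y \simeq (\Gamma((\Gamma Y)_{b}))^{\times} \simeq ((\Gamma Y)_{b})^{\times}$.

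Combining the two chains produces the claimed canonical isomorphism. The main obstacle, such as it is, lies in bookkeeping: verifying that every comparison map used is equivariant for the appropriate $G$- or $G^{\times}$-action rather than merely a $q$-equivalence of underlying $\cI$-spaces. This follows from the fact that $\iota \colon Y^{\times} \to Y$ is a map of $G^{\times}$-modules (Proposition \ref{prop:subtorsor_properties}.(i)) together with the observation that the fiber functor $i_{b}^{\ast}$, being a right adjoint, commutes with the defining pullback of $(-)^{\times}$; no new ideas beyond the constructions already in the section should be required.
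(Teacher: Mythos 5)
Your proposal is correct and follows essentially the same route as the paper's terse justification (the paragraph immediately preceding the lemma statement): apply $\Gamma$ to reduce to the $h$-fibration case, use Lemma~\ref{lemma:subtorsor_path_comp} to see that $(\Gamma Y)^{\times}$ is a level-wise $h$-fibration (hence a quasifibration, so its actual fiber represents the derived fiber), and invoke the canonical isomorphism $(Y^{\times})_{b}\cong (Y_{b})^{\times}$ from Construction~\ref{def:subtorsor}. You spell out more carefully than the paper does why $(\Gamma Y)_{b}$ represents $\bF_{b}Y$ and why $\boldsymbol{\mu}$ over a point collapses to $(-)^{\times}$ via Propositions~\ref{gamma_properties}.(iii) and~\ref{prop:subtorsor_properties}.(iii), but these are the same underlying observations.
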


We will also need to know how to construct maps into $\boldsymbol{\mu}$.

\begin{lemma}\label{lem:factor_through_mu}
A morphism $X \arr Y$ in the homotopy category of $G^{\times}$-spaces over $B$ factors through $\iota \colon \boldsymbol{\mu} Y \arr Y$ if and only if for every $b \in B$, the induced map $\pi_0 \bF_b X \arr \pi_0 \bF_b Y$ has image contained in the subset $\pi_0 \boldsymbol{\mu} \bF_b Y$.

\end{lemma}
\begin{proof}
First notice that the functor $\pi_0 \bF_b$ is invariant under weak equivalences of spaces over $B$.  We may represent a map in the homotopy category of $G^{\times}$-spaces over $B$ by a zig-zag of map where the wrong way maps are weak equivalences, and we assume without loss of generality that the final object in this zig-zag is a $h$-fibrant $G^{\times}$-space over $B$.  The result then follows by using the universal mapping property of the pullback of spaces defining $\mu$.
\end{proof}

\begin{definition}\label{def:associated_torsor}  Let $N$ be an $R$-bundle with fiber $M$ and let $R^{s}N$ be an $s$-fibrant approximation of $N$ as an $R$-module over $B$.  Since $M$ is an $s$-cofibrant $R$-module, the $\End_{R} M$-torsor $\Om_{B} F^{R}(M, R^{s}N)$ is $qf$-fibrant as an $\End_{R}M$-module.  Applying Construction \ref{def:subtorsor} defines an $\Aut_{R}M$-space over $B$
\[
\quad E^{R}(M, R^{s}N) = (\Om_{B} F^{R}(M, R^{s}N))^{\x}
\]
which need not be an $\Aut_{R}M$-torsor.  If we instead take the derived functor $\boldsymbol{\mu}$ by applying the $h$-fibration approximation functor $\Gamma$ before $(-)^{\times}$, then the value of the associated derived functor
\[
\bE^{R}(M, N) = \boldsymbol{\mu}\Om_{B} F^{R}(M, R^{s}N)
\]
is our definition of the $\Aut_{R}M$-torsor associated to the $R$-bundle $N$.  
Since $\Om_{B}$ and $F^{R}(M, -)$ are both right Quillen functors, we can summarize the definition by saying that 
\[
\bE = \bE^{R}(M, -) \colon \ho (\text{$R$-bundles with fiber $M$}) \arr \ho (\text{$\Aut_{R}M$-torsors})
\]
is the composite derived functor $\bE = \boldsymbol{ \mu} \circ \boldsymbol{ \Omega}$, where $\boldsymbol{ \Omega}$ is the right derived functor of $\Omega =  \Om_{B} F^{R}(M, - )$.  
\end{definition}

\section{The classification of $R$-bundles}\label{section:bundle_classification}

In the previous section we constructed an $\Aut_{R}M$-torsor from an $R$-bundle with fiber $M$.  We now construct an $R$-bundle with fiber $M$ from an $\Aut_{R}M$-torsor and show that the constructions are homotopy inverse to each other.  At the end of the section, we complete the proof of Theorem \ref{thm:classification_of_Rbundles}.  We assume that $B$ is a CW-complex, in particular semi-locally contractible, so the functor $\mu$ from the previous section is well-behaved.  We continue to assume that $R$ is an $s$-cofibrant ring spectrum and that $M$ is an $s$-bifibrant $R$-module.

For technical reasons, it will be useful to work with a $q$-cofibrant approximation $\Aut_{R}^{c}M \arr \Aut_{R} M$ of $\Aut_{R} M$ as a topological monoid.  By pullback along the approximation map, any $\Aut_{R}M$-torsor is also an $\Aut_{R}^{c}M$-torsor, so we consider the functor $\bE = \boldsymbol{ \mu} \circ \boldsymbol{ \Omega}$ from the previous section as taking values in $\Aut_{R}^{c}M$-torsors.  Similarly, the right $\Si_{+} \Aut_{R}M$-module structure of $M$ pulls back to give a right $\Si_{+} \Aut_{R}^{c}M$-module structure on $M$.

\begin{definition}\label{def:associated_bundle}
If $Y$ is an $\Aut_{R}^{c}M$-space over $B$, then the fiberwise suspension spectrum $\Si_{B} Y$ is a $\Si_{+} \Aut_{R}^{c}M$-module spectrum over $B$.  The construction
\[
T(Y) = M \sma_{\Si_{+}\Aut_{R}^{c}M} \Si_{B} Y
\]
defines a functor from $\Aut_{R}^{c}M$-spaces over $B$ to $R$-module spectra over $B$ which is left Quillen by Prop. \ref{prop:sus_is_quillen} and \ref{prop:param_quillen_adjunction}.  We let $\bT = \bbL T$ denote its left derived functor.  Note that $\bT$ is left adjoint to the right derived functor $\boldsymbol{ \Omega } = \Om_{B} \bF^{R}(M, -)$. In Proposition \ref{prop:derived_commute}, we will prove that when $Y$ is an $\Aut_{R}^{c}M$-torsor over $B$, then $\bT Y$ is an $R$-bundle with fiber $M$, so that we have a functor
\[
\bT \colon \ho (\text{$\Aut_{R}^{c}M$-torsors})  \arr \ho (\text{$R$-bundles with fiber $M$}).
\]
\end{definition}


\begin{remark}\label{construction_thomspectra} 
In the case $M = R$, the definition recovers the construction of generalized Thom spectra from \citelist{\cite{ABGHR1} \cite{ABGHR2}}.  Given a map of spaces $f \colon B \arr B \mGL_1 R$, the classification of principal $\mGL_1 R$-fibrations gives a principal $\mGL_1 R$-fibration $Y_f$ over $B$.  Applying the functor $T$ then gives a rank one $R$-bundle over $B$.  The Thom spectrum associated to the map $f$ is the (non-parametrized) $R$-module spectrum
\[
\mathrm{M}f= r_{!} T Y_{f} \cong  R \sma_{\Si_{+} \mGL_1^{c} R} \Si_{+} Y_f ,
\]
where $r_{!} \colon \sS_{B} \arr \sS$ is left adjoint to the pullback functor $r^* \colon \sS \arr \sS_{B}$.
\end{remark}

The fiber functor $(-)_b = i_{b}^*$ is a left adjoint, but is not left Quillen for either the stable model structure on parametrized spectra or the $qf$-model structure on parametrized spaces.  However, $i_{b}^*$ is a right Quillen functor.  On the other hand, $T = M \sma_{\Si_{+} \Aut_{R}^{c}M} \Si_{B}(-)$ is a left Quillen functor. There is a natural isomorphism of functors
\[
(M \sma_{\Si_{+} \Aut_{R}^{c} M} \Si_{B} Y)_{b} \cong M \sma_{\Si_{+} \Aut_{R}^{c} M} \Si_{+} Y_{b}
\]
at the point-set level, but this does not imply an isomorphism of derived functors after passage to homotopy categories because we are composing left and right derived functors.  

In order to prove the commutation of derived functors, we will make a slight modification to the functor $T$.  By identifying $R \sma_{S} (\Si_{+} \Aut_{R}^{c} M)^{\op}$-modules with $(R, \Si_{+} \Aut_{R}^{c} M)$-bimodules, the category of $(R, \Si_{+} \Aut_{R}^{c} M)$-bimodules is a well-grounded compactly generated model category with weak equivalences and fibrations created in the $s$-model structure on spectra \cite{MMSS}*{12.1}.  Let $M^{\circ} \arr M$ be an $s$-cofibrant approximation of $M$ as an $(R, \Si_{+} \Aut_{R}^{c} M)$-bimodule and define
\[
T^{\circ}(Y) = M^{\circ} \sma_{\Si_{+}\Aut_{R}^{c}M} \Si_{B} Y
\]  
Note that since $\Si_{+}$ is left Quillen, $\Si_{+} \Aut_{R}^{c}M$ is $s$-cofibrant as a ring spectrum, and thus $s$-cofibrant as a spectrum.  We record a basic consequence.

\begin{lemma}\label{underling_rightmodule_ofM_cofibrant}
The underlying left $R$-module of $M^{\circ}$ is $s$-cofibrant.  The underlying right $\Si_{+}\Aut_{R}^{c}M$-module of $M^{\circ}$ is $s$-cofibrant.
\end{lemma}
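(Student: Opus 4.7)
The plan is to exploit the fact that an $(R, \Sigma^{\bullet}_{+}\Aut_{R}^{c}M)$-bimodule is the same as a module over the enveloping ring spectrum $R \sma_{S} (\Sigma^{\bullet}_{+}\Aut_{R}^{c}M)^{\op}$, so that the $s$-model structure on bimodules is cofibrantly generated by maps of the form
\[
R \sma i \sma (\Sigma^{\bullet}_{+}\Aut_{R}^{c}M)^{\op},
\]
where $i$ ranges over a set of generating $s$-cofibrations of orthogonal spectra. Since the bimodule model category is well-grounded, every $s$-cofibrant bimodule is a retract of a transfinite cellular extension of such generating cofibrations, and it suffices to show that forgetting half of the bimodule structure sends each such generating cofibration to an $s$-cofibration of $R$-modules (respectively of right $\Sigma^{\bullet}_{+}\Aut_{R}^{c}M$-modules); the conclusion then propagates along pushouts, transfinite compositions and retracts in the one-sided module model structures.

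To verify this for the underlying left $R$-module, I would first observe that $\Sigma^{\bullet}_{+}\Aut_{R}^{c}M$ is $s$-cofibrant as a ring spectrum (since $\Sigma^{\bullet}_{+}$ is left Quillen and $\Aut_{R}^{c}M$ is $q$-cofibrant), and the underlying orthogonal spectrum of a cofibrant ring spectrum is $s$-cofibrant in the MMSS model structure. Next I would apply Lemma \ref{half_parametrized_monoid_axiom} in the non-parametrized case ($B = \ast$) with the $s$-cofibration $j$ being the unit $\emptyset \arr \Sigma^{\bullet}_{+}\Aut_{R}^{c}M$ of the $s$-cofibrant spectrum, which shows that smashing with $\Sigma^{\bullet}_{+}\Aut_{R}^{c}M$ preserves $s$-cofibrations of orthogonal spectra. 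So each $i \sma \Sigma^{\bullet}_{+}\Aut_{R}^{c}M$ is an $s$-cofibration of spectra, and smashing with the $s$-cofibrant ring spectrum $R$ via the Quillen adjunction between spectra and $R$-modules sends this to an $s$-cofibration of $R$-modules.

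For the underlying right $\Sigma^{\bullet}_{+}\Aut_{R}^{c}M$-module, I would run the symmetric argument: first smash $i$ on the left by the $s$-cofibrant spectrum $R$ (again by Lemma \ref{half_parametrized_monoid_axiom} with $B = \ast$) to obtain an $s$-cofibration of spectra, and then smash with $\Sigma^{\bullet}_{+}\Aut_{R}^{c}M$ on the right via the free-forgetful Quillen adjunction for modules over this ring spectrum, yielding an $s$-cofibration of right $\Sigma^{\bullet}_{+}\Aut_{R}^{c}M$-modules. The essential input in both halves is that $R$ and $\Sigma^{\bullet}_{+}\Aut_{R}^{c}M$ are each $s$-cofibrant as spectra, which is precisely what makes Lemma \ref{half_parametrized_monoid_axiom} applicable and what the preceding paragraph in the paper has already recorded.

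The main obstacle, though a mild one, is verifying the transition from "$s$-cofibrant as a ring spectrum" to "$s$-cofibrant as an underlying spectrum" for $\Sigma^{\bullet}_{+}\Aut_{R}^{c}M$; this uses the MMSS-specific fact that the free associative ring functor on orthogonal spectra preserves cofibrancy because the sphere spectrum $S$ is cofibrant, and dualizes symmetrically to give the required cofibrancy of $R$ itself (which is taken as a hypothesis). Once this is in hand, the rest of the argument is a routine cell-induction using the well-groundedness of the module model structures.
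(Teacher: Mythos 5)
Your proof is correct but takes a genuinely different route from the paper's. The paper's proof is a short Quillen-adjunction argument: the forgetful functor from $(R, \Sigma^{\bullet}_{+}\Aut_{R}^{c}M)$-bimodules to left $R$-modules has a right adjoint, the cofree functor $F^{S}(\Sigma^{\bullet}_{+}\Aut_{R}^{c}M, -)$; since $\Sigma^{\bullet}_{+}\Aut_{R}^{c}M$ is $s$-cofibrant as a spectrum, this right adjoint preserves fibrations and acyclic fibrations, so the forgetful functor is left Quillen and in particular preserves cofibrant objects. (The second half dualizes using $F^{S}(R,-)$ and $s$-cofibrancy of $R$.) Your approach instead verifies left-Quillenness concretely on the generating cofibrations via a cell induction, using the pushout-product axiom to show that smashing a generating cofibration of spectra with the cofibrant spectrum $\Sigma^{\bullet}_{+}\Aut_{R}^{c}M$ yields a cofibration, and then inducing up to a free left $R$-module cofibration. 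Both are valid; the paper's is shorter and more abstract, yours is more explicit and makes visible exactly where the cofibrancy hypotheses on $R$ and $\Sigma^{\bullet}_{+}\Aut_{R}^{c}M$ enter at the level of cells. One small remark: your citation of Lemma \ref{half_parametrized_monoid_axiom} with $A = R = S$ and $B = \ast$ is really just an invocation of the pushout-product axiom for orthogonal spectra from MMSS; it works, but it would be cleaner to cite that axiom directly rather than route through the parametrized statement, whose extra generality (nontrivial $R$, $A$, and base $B$) plays no role here.
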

\begin{proof}
The right adjoint of the forgetful functor from $(R, \Si_{+}\Aut_{R}^{c} M)$-bimodules to left $R$-modules is the function spectrum functor $F^{S}(\Si_{+} \Aut_{R}^{c}M, -)$.  This functor preserves fibrations and acyclic fibrations because $\Si_{+} \Aut_{R}^{c} M$ is $s$-cofibrant.  Therefore its left adjoint the forgetful functor preserves cofibrations and acyclic cofibrations.  This proves the first claim.  The second claim follows using a similar argument and the fact that $R$ is $s$-cofibrant.
\end{proof}

When $B = \ast$, the fact that $M^{\circ}$ is $s$-cofibrant as a $\Si_{+}\Aut_{R}^{c}M$-module implies that the functor $M^{\circ} \sma_{\Si_{+} \Aut_{R}^{c}M} (-)$ preserves stable equivalences \cite{MMSS}*{12.7}.  Along with Lemma \ref{suspension_preserves_we_lemma}, this shows that the functor $T^{\circ}$ takes $q$-equivalences to stable equivalences when the base is a point.  
The proof of the next result is inspired by Shulman's examples in \S9 of \cite{shulman_doubles}.  To improve clarity, we temporarily revert to the usual notation $\bbL$ and $\bbR$ for left and right derived functors.

\begin{lemma}\label{lemma:derived_commute}  Let $f \colon \ast \arr B$ be the inclusion of a point.  Then there is a natural isomorphism of derived functors $\bbR f^* \bbL T \cong \bbL T \bbR f^*$.
\end{lemma}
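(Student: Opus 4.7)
The plan is to pick $Y$ both cofibrant and $qf$-fibrant as an $\Aut_{R}^{c}M$-module over $B$---possible because an acyclic cofibration into a $qf$-fibrant object preserves cofibrancy---and verify the commutation by inspection. With this choice, $\bL T Y = T Y$ by cofibrancy, $\bR f^* Y = Y_b$ by $qf$-fibrancy, and the remark preceding the lemma (combining Lemma \ref{suspension_preserves_we_lemma} with the fact that $M^{\circ} \sma_{\Si_{+} \Aut_{R}^{c}M}(-)$ preserves stable equivalences) gives $\bL T \bR f^* Y \cong T(Y_b)$. On the other side, $\bR f^* \bL T Y$ is computed as the fiber at $b$ of an $s$-fibrant replacement $TY \arr R^{s}(TY)$ of $TY$ as an $R$-module over $B$. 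The point-set identity $(TY)_{b} \cong T(Y_b)$ yields a canonical comparison
\[
\bL T \bR f^* Y \cong T(Y_b) \cong (TY)_b \arr (R^{s}TY)_b \cong \bR f^* \bL T Y,
\]
and the lemma reduces to showing that this map is a stable equivalence, i.e., that $(TY)_b$ already represents $\bR f^*(TY)$.

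To establish this, I would prove that $TY$ is level-wise ex-quasifibrant in the sense of \cite{MS}*{8.1.1, 8.5.1}; then by \cite{MS}*{12.4.1} the stable equivalence $TY \arr R^{s}TY$ descends to a stable equivalence on fibers, yielding the required identification. The argument for level-wise ex-quasifibrancy proceeds in three steps. First, since $Y$ is $qf$-fibrant, Proposition \ref{qf_fibration_description_prop} implies that each $Y(V) \arr B$ is a $qf$-fibration of spaces and hence a quasifibration, so the ex-space $Y(V)_{B} = Y(V) \amalg B$ with disjoint section is ex-quasifibrant. Second, the fiberwise smash with $S^{V}_{B}$ preserves ex-quasifibrancy by \cite{MS}*{8.5.3.(iii)} as in the proof of Lemma \ref{suspension_preserves_we_lemma}, so $\Si_{B} Y$ is level-wise ex-quasifibrant. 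Third, the relative smash product $M^{\circ} \sma_{\Si_{+} \Aut_{R}^{c}M} \Si_{B} Y$, realized from the external smash product via a bar construction, preserves level-wise ex-quasifibrancy, because its simplicial filtration is assembled from iterated external smash products with the $s$-cofibrant non-parametrized spectra $M^{\circ}$ and $\Si_{+} \Aut_{R}^{c}M$, together with pushouts and filtered colimits along $\overline{f}$-cofibrations---all of which preserve ex-quasifibrancy.

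The main obstacle is the third step, which requires carefully tracking the propagation of ex-quasifibrancy through the relative smash product and the geometric realization of the bar construction. This rests on the closure properties of ex-quasifibrant ex-spaces developed in \cite{MS}*{Ch. 8} together with Lemma \ref{half_parametrized_monoid_axiom} to identify the $\overline{f}$-cofibrations arising in the simplicial filtration. Once in place, the fiber map $(TY)_{b} \arr (R^{s}TY)_{b}$ is a stable equivalence, and the chain of comparisons above delivers the natural isomorphism $\bR f^* \bL T \cong \bL T \bR f^*$, evidently natural in $Y$ since every construction used is.
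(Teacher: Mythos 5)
Your reduction of the lemma is the same as the paper's: choose a $qf$-bifibrant $Y$, use the point-set isomorphism $T f^* \cong f^* T$ and the fact (from Lemma \ref{suspension_preserves_we_lemma} together with the $s$-cofibrancy of $M^{\circ}$ as a right $\Si_{+}\Aut_{R}^{c}M$-module) that $T$ preserves all weak equivalences over a point, and reduce everything to showing that $f^*$ carries the stable equivalence $TY \arr R^{s}TY$ to a stable equivalence. Where you diverge is in how to establish this last step, and here your strategy has a genuine gap.

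You propose to show that $TY$ is level-wise ex-quasifibrant and then appeal to \cite{MS}*{12.4.1}. Steps 1 and 2 are fine---they mirror the use of \cite{MS}*{8.5.3.(iii)} in the proof of Lemma \ref{suspension_preserves_we_lemma}. But Step 3, which you yourself flag as the main obstacle, is not a small technicality: you need the relative smash product $M^{\circ} \sma_{\Si_{+}\Aut_{R}^{c}M}(-)$ to preserve level-wise ex-quasifibrancy. This functor is a coequalizer (not a bar construction) in orthogonal spectra, and quasifibrations are notoriously unstable under exactly the kinds of colimit operations you invoke---pushouts, coequalizers, geometric realizations, and external smash with arbitrary spectra. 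The closure properties you want are not established in \cite{MS}*{Ch.\ 8}, and Lemma \ref{half_parametrized_monoid_axiom} gives cofibrancy of the pushout product, not the propagation of quasifibrancy through it. Filling this gap would require a significant amount of new point-set-level work (identifying precisely which of May's quasifibration-gluing criteria apply to the cell filtration of $M^{\circ}$), and it is far from clear the needed statements are true without additional hypotheses.

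The paper sidesteps this entirely with a slicker model-categorical argument: factor $f \colon \ast \arr B$ as a $q$-equivalence followed by a $q$-fibration. For the $q$-equivalence leg, $(f_{!}, f^{*})$ is a Quillen equivalence (Proposition \ref{prop:base_change_quillen} for $\Aut_{R}^{c}M$-modules, \cite{MS}*{12.6.7} for $R$-modules), so the composites of derived functors commute formally by the unit/counit argument of \cite{shulman_doubles}. For the $q$-fibration leg, one interposes a level-wise $qf$-fibrant approximation $TX \arr R^{l}TX \arr R^{s}TX$: pullback along a $q$-fibration preserves level-wise $q$-equivalences, so $f^*TX \arr f^*R^{l}TX$ is a stable equivalence, and $f^*$ preserves stable equivalences between level-wise $qf$-fibrant spectra, handling the other leg. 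This avoids any need to know that the relative smash product itself is quasifibrant. You should replace your Step 3 with this factorization argument, or at least be aware that the ex-quasifibrancy route requires substantial further justification.
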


\begin{proof} 
The equivalence $M^{\circ} \arr M$ induces an isomorphism of derived functors $\bbR F^{R}(M, -) \cong \bbR F^{R}(M^{\circ}, -)$ since $M$ and $M^{\circ}$ are cofibrant $R$-modules.  This determines an isomorphism of derived functors $\bbL T^{\circ} \cong \bbL T$, so it suffices to prove the result with $T$ replaced by $T^{\circ}$.

Suppose that $X$ is a $qf$-bifibrant $\Aut_R^{c}M$-space over $B$, and consider the following natural transformation of $R$-modules
\begin{equation}\label{commutation_transformation}
T^{\circ} Q^{qf} f^* X \arr T^{\circ} f^* X \overset{\cong}{\arr} f^* T^{\circ} X \arr f^* R^{s} T^{\circ} X,
\end{equation}
where the first and third maps are induced by $qf$-cofibrant approximation and $s$-fibrant approximation, respectively.  Since $T^{\circ}$ preserves all weak equivalences when the base is a point, the first map is a stable equivalence.  The second map is the canonical isomorphism.  It remains to show that $f^*$ preserves the stable equivalence $T^{\circ}X \arr R^{s} T^{\circ}X$.  

Factor $f$ as a $q$-equivalence followed by a $q$-fibration, and consider the two cases separately.  In the first case, the Quillen adjunction $(f_{!}, f^*)$ is a Quillen equivalence both for parametrized $\Aut_{R}^{c}M$-spaces (Prop. \ref{prop:base_change_quillen}) and parametrized $R$-modules (the case of $R = S$ is \cite{MS}*{12.6.7} and the general case follows since stable equivalences and $s$-fibrations of $R$-modules are detected by the forgetful functor to parametrized spectra).  It follows that the natural transformation of derived functors 
\[
\bbL T^{\circ} \bbR f^*  \overset{\eta}{\arr} \bbR f^* \bbL f_{!}  \bbL T^{\circ} \bbR f^* \cong \bbR f^* \bbL T^{\circ} \bbL f_{!} \bbR f^* \overset{\epsilon}{\arr} \bbR f^* \bbL T^{\circ}
\]
is an isomorphism.  As discussed in \cite{shulman_doubles}*{\S7}, this isomorphism of derived functors is represented by the composite \eqref{commutation_transformation}.  In particular, $f^* T^{\circ}X \arr f^* R^{s} T^{\circ}X$ is a stable equivalence in this case, since the map $f$ is still the inclusion of a point.

When $f$ is a $q$-fibration, we instead consider a level-wise $qf$-fibrant approximation $T^{\circ}X \arr R^{l} T^{\circ}X$.  There is a stable equivalence $R^{l} T^{\circ}X \arr R^{s} T^{\circ}X$ under $T^{\circ}X$ \cite{MS}*{12.6.1} and the induced map $f^* R^{l} T^{\circ}X \arr f^* R^{s} T^{\circ}X$ is a stable equivalence because $f^*$ preserves stable equivalences between level-wise $qf$-fibrant spectra.  Pullback along $q$-fibrations preserves weak homotopy equivalences of topological spaces, so $f^* T^{\circ}X \arr f^* R^{l} T^{\circ}X$ is a level-wise $q$-equivalence, hence a stable equivalence.  Therefore $f^* T^{\circ}X \arr f^* R^{s} T^{\circ}X$ is also a stable equivalence.
\end{proof}

We return to using bold-face letters to denote derived functors: $\bT$ is the left derived functor of $T$ and $\bF_{b} = \bbR i_{b}^*$ is the right derived fiber functor.  Recall that the $\Aut_{R}^{c}M$-torsor associated to an $R$-bundle with fiber $M$ is given by
\[
\bE = \bE^{R}(M, -) = \boldsymbol{\mu} \circ \boldsymbol{\Omega},
\]
where $\boldsymbol{\mu}$ is the derived functor of Construction \ref{def:subtorsor} and $\boldsymbol{\Omega}$ is the right derived functor of $\Omega = \Om_{B}F^{R}(M, - )$.

\begin{proposition}\label{prop:derived_commute}
There are natural isomorphisms of derived functors $\bF_{b} \bT \cong \bT \bF_{b}$ and $\bF_{b} \bE \cong \bE \bF_{b}$.
\end{proposition}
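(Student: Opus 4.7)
My plan is to deduce each isomorphism separately, handling $\bT$ via the nontrivial lemma just proved and $\bE$ by commuting $\bF_b$ past each factor of $\bE = \boldsymbol{\mu}\boldsymbol{\Omega}$.

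For $\bF_b \bT \cong \bT \bF_b$, this is essentially a restatement of Lemma \ref{lemma:derived_commute}. That lemma gives $\bR f^* \bL T \cong \bL T \bR f^*$ when $f \colon \ast \arr B$ is the inclusion of a point. Taking $f = i_b$ and unpacking notation, $\bR i_b^* = \bF_b$ on both sides (on parametrized $R$-modules on the left, on parametrized $\Aut_R^c M$-modules on the right), while $\bL T = \bT$. So no further work is needed.

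For $\bF_b \bE \cong \bE \bF_b$, I will factor as $\bE = \boldsymbol{\mu}\boldsymbol{\Omega}$ where $\boldsymbol{\Omega} = \bR\Om_B F^R_B(M^\circ, -)$, and commute $\bF_b$ past each factor. The commutation $\bF_b \boldsymbol{\mu} \cong \boldsymbol{\mu} \bF_b$ is exactly Lemma \ref{lemma:mu_commutes}. For the commutation $\bF_b \boldsymbol{\Omega} \cong \boldsymbol{\Omega} \bF_b$, the situation is simpler than in Lemma \ref{lemma:derived_commute} because all three functors involved are right Quillen: $i_b^*$ is right adjoint to $i_{b,!}$; $\Om_B$ is right Quillen by Proposition \ref{prop:sus_is_quillen}; and $F^R_B(M^\circ, -)$ is right Quillen by Proposition \ref{prop:param_quillen_adjunction} since $M^\circ$ is a cofibrant $(R, \Si_+ \Aut_R^c M)$-bimodule (so in particular a cofibrant left $R$-module by Lemma \ref{underling_rightmodule_ofM_cofibrant}). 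The composite of right derived functors is the right derived functor of the composite, so on either side we may compute $\bF_b \boldsymbol{\Omega}$ and $\boldsymbol{\Omega} \bF_b$ by first taking an $s$-fibrant approximation $R^s N$ of $N$ as an $R$-module over $B$ and then applying the point-set functors. The canonical point-set isomorphism
\[
(\Om_B F^R_B(M^\circ, R^s N))_b \cong \Om F^R(M^\circ, (R^s N)_b)
\]
coming from the fact that $i_b^*$ commutes with $F^R(M^\circ, -)$ and $\Om$ up to canonical isomorphism (as noted in the proof of Lemma \ref{choice_of_equiv_lemma}) then gives the desired natural isomorphism, once we observe that $(R^s N)_b$ represents $\bF_b N$ as an $R$-module (essentially the definition) and that it is $s$-fibrant so no further fibrant replacement is needed on the right.

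Stringing these together yields
\[
\bF_b \bE = \bF_b \boldsymbol{\mu} \boldsymbol{\Omega} \cong \boldsymbol{\mu} \bF_b \boldsymbol{\Omega} \cong \boldsymbol{\mu} \boldsymbol{\Omega} \bF_b = \bE \bF_b,
\]
completing the argument. The only real subtlety is the first isomorphism $\bF_b \bT \cong \bT \bF_b$, but the work for that has already been done in Lemma \ref{lemma:derived_commute}; the second isomorphism is formal once one verifies that all functors in $\boldsymbol{\Omega}$ are right Quillen, so derived composition is strict composition on fibrant inputs.
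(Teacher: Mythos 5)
Your proposal is correct and takes essentially the same approach as the paper: $\bF_b\bT \cong \bT\bF_b$ is cited directly from Lemma~\ref{lemma:derived_commute}, and $\bF_b\bE \cong \bE\bF_b$ is obtained by factoring $\bE = \boldsymbol{\mu}\boldsymbol{\Omega}$, invoking Lemma~\ref{lemma:mu_commutes} for the $\boldsymbol{\mu}$ factor, and observing that the point-set isomorphism $i_b^*\Omega \cong \Omega i_b^*$ descends to derived functors because all functors involved are right Quillen. The only difference is that you spell out the right-Quillen verifications (citing Propositions~\ref{prop:sus_is_quillen} and~\ref{prop:param_quillen_adjunction} and the point-set identification from Lemma~\ref{choice_of_equiv_lemma}) that the paper leaves implicit; this is just helpful elaboration, not a different argument.
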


\begin{proof}
The first isomorphism is Lemma \ref{lemma:derived_commute}.  For the second, observe that the canonical isomorphism $i_{b}^* \Omega \cong \Omega i_{b}^*$ descends to a canonical isomorphism of derived functors $\bF_{b} \boldsymbol{\Omega} \cong \boldsymbol{\Omega} \bF_{b}$ because $i_{b}^*$ and $\Omega$ are both right Quillen.  By Lemma, \ref{lemma:mu_commutes}, there is a natural isomorphism $\bF_{b} \boldsymbol{\mu} \cong \boldsymbol{\mu} \bF_{b}$, completing the proof.
\end{proof}


In particular, the derived functor $\bT$ takes $\Aut_{R}^{c}M$-torsors to $R$-bundles with fiber $M$, as promised in Definition \ref{def:associated_bundle}.  We are now ready to prove the main theorem of this section.

\begin{theorem}\label{torsor_bundle_equiv_theorem}
The pair of functors $(\bT, \bE)$ defines a bijection between the set of $q$-equivalence classes of $\Aut_{R}^{c} M$-torsors over $B$ and the set of stable equivalence classes of $R$-bundles with fiber $M$ over $B$.
\end{theorem}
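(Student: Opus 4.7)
The plan is to exhibit $\bT$ and $\bE$ as mutually inverse bijections on equivalence classes, by constructing natural comparison maps $Y \to \bE\bT Y$ and $\bT\bE N \to N$ and showing they are weak equivalences. The underlying strategy is to use the derived adjunction $(\bT, \boldsymbol{\Omega})$ where $\Omega = \Om_{B} F^{R}_{B}(M^{\circ}, -)$, together with the fiber commutation results of Proposition \ref{prop:derived_commute}, to reduce everything to the non-parametrized case over a point.

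First I would package $(T, \Omega)$ as a Quillen adjunction between $\Aut_{R}^{c}M$-modules over $B$ and $R$-modules over $B$, combining Propositions \ref{prop:sus_is_quillen} and \ref{prop:param_quillen_adjunction}. This gives a derived unit $\eta \colon Y \to \boldsymbol{\Omega}\bT Y$ and derived counit $\epsilon \colon \bT\boldsymbol{\Omega} N \to N$. For a torsor $Y$, I need to verify that $\eta$ factors through the submodule $\bE\bT Y = \boldsymbol{\mu}\boldsymbol{\Omega}\bT Y \subset \boldsymbol{\Omega}\bT Y$, which amounts to checking on $\pi_{0}$ of each derived fiber that the action of $\pi_{0}\Aut_{R}^{c}M$ on $\pi_{0}\bF_{b} Y$ hits only invertible components. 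By the commutation $\bF_{b}\bT \cong \bT\bF_{b}$ this reduces to the observation that for $Y$ equivalent to $\Aut_{R}^{c}M$, the image of $\eta$ on fibers realizes $\Aut_{R}^{c}M$ itself, which lies in the invertible components by definition. Similarly, the counit $\epsilon$ restricts along the canonical inclusion $\bE N \hookrightarrow \boldsymbol{\Omega} N$ to produce a comparison map $\tilde\epsilon \colon \bT\bE N \to N$.

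To show $\tilde\eta$ and $\tilde\epsilon$ are equivalences, I would apply $\bF_{b}$ at each $b \in B$ and use Proposition \ref{prop:derived_commute} to rewrite $\bF_{b}\bE\bT Y \cong \bE\bT\bF_{b} Y$ and $\bF_{b}\bT\bE N \cong \bT\bE\bF_{b} N$. Since stable equivalences of parametrized spectra and $q$-equivalences of parametrized $\cI$-spaces are detected on fibers after suitable fibrant replacement, it suffices to prove the equivalences over a point. Over a point, an $\Aut_{R}^{c}M$-torsor is (equivalent to) $\Aut_{R}^{c}M$ and $T(\Aut_{R}^{c}M) \cong M^{\circ}\sma_{\Si_{+}\Aut_{R}^{c}M}\Si_{+}\Aut_{R}^{c}M \cong M^{\circ} \simeq M$, using the cofibrancy of $M^{\circ}$ as a $\Si_{+}\Aut_{R}^{c}M$-module (Lemma \ref{underling_rightmodule_ofM_cofibrant}) and Lemma \ref{suspension_preserves_we_lemma}. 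Conversely, for an $R$-bundle equivalent to $M$, we compute $\bE M = \boldsymbol{\mu}\boldsymbol{\Omega}F^{R}(M^{\circ}, M) \simeq \boldsymbol{\mu}\Om F^{R}(M,M) = \Aut_{R}M$, which restricts along the $q$-equivalence $\Aut_{R}^{c}M \to \Aut_{R}M$ to give an object equivalent to $\Aut_{R}^{c}M$. The fiberwise unit and counit are then recognized as the standard unit and counit of the Morita-type equivalence between $M$-modules and $\Aut_{R}M$-torsors, well-known to be equivalences in the non-parametrized setting.

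The main obstacle I expect is the compatibility of the derived functor $\boldsymbol{\mu}$ with the derived adjunction: verifying rigorously that the factorization of $\eta$ through $\bE\bT Y$ respects $q$-equivalences so it descends to homotopy categories, and that the fiberwise units-picking operation commutes with $\bF_{b}$ in the sense of Lemma \ref{lemma:mu_commutes}. This in turn relies on the semi-local contractibility of $B$ (Proposition \ref{prop:subtorsor_properties}) and careful use of $h$-fibrant approximation via $\Gamma$ in Definition \ref{def:associated_torsor}, so one must ensure that the model-categorical cofibrant/fibrant replacements implicit in all the derived functors can be chosen compatibly with these point-set constructions. Once these coherences are in place, combining steps (3) and (4) gives $\bT\bE \simeq \id$ and $\bE\bT \simeq \id$ on homotopy categories, which descends to the desired bijection on equivalence classes.
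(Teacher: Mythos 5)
Your proposal follows essentially the same strategy as the paper's proof: construct the comparison maps $\zeta\colon Y\to\bE\bT Y$ and $\xi\colon\bT\bE N\to N$ from the units and counits of the adjunctions $(\Si_B,\Om_B)$ and $(\bT,\bF^R(M,-))$, verify the factorization through the $\boldsymbol\mu$-subobject on $\pi_0$ of each derived fiber, and then use Proposition~\ref{prop:derived_commute} to reduce to the check over a point where the torsor becomes $\Aut_R^c M$ and the bundle becomes $M$. The paper is more explicit about the commutative diagram and triangle-identity chase that establishes the factorization and identifies $\bF_b\zeta$ with the cofibrant approximation map $\Aut_R^c M\to\Aut_R M$, but the outline you give is the same argument.
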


\begin{proof}  We work in the homotopy categories of $\Aut_R^{c}M$-spaces over $B$ and $R$-modules over $B$.
Suppose that $Y$ is an $\Aut_{R}^{c}M$-torsor over $B$.  We will construct a natural transformation of derived functors $\zeta \colon Y \arr \bE \bT Y$ by showing that the unit of the adjunction $(\bT, \boldsymbol{ \Omega })$ factors through $\bE^{R}(M, \bT Y)$ as indicated in the following diagram.
\begin{equation}\label{diagram:wanted_factorization}
\xymatrix{
Y \ar[r]^-{\eta} \ar@{-->}[drr]_-{\zeta} & \Om_{B} \Si_{B} Y \ar[r]^-{\eta}  & \Om_{B} \bF^{R}(M, \bT Y ) \\
& & 
\bE^{R}(M, \bT Y) \ar[u]_{\iota}\\
}
\end{equation}
By Lemma \ref{lem:factor_through_mu}, it suffices to prove that if we apply $\pi_0 \bF_b$, then the unit map has image lying in the subset $\pi_0 \boldsymbol{\mu} \bF_b \boldsymbol{\Omega} \bT Y$.


Apply $\bF_{b}$ to diagram \eqref{diagram:wanted_factorization} and commute $\bF_{b}$ past the constituent functors to the input variable $Y$.  Now fix an isomorphism in the derived category $\bF_{b} Y \cong \Aut_{R}^{c}M$ and consider the isomorphic diagram with $\bF_{b} Y$ replaced by $\Aut_{R}^{c}M$.  The composite of the two instances of $\eta$ in this new diagram is the left vertical composite in the following commutative diagram.
\[
\xymatrix{
\Aut_{R}^{c}M \ar[d] \ar[r] & \End_{R} M \ar[d] \\
\Om \Si_{+} \Aut_{R}^{c}M \ar[d] \ar[r]  &  \Om \Si_{+} \End_{R} M \ar[d] \\
 \Om \bF^{R}(M, M \sma_{\Si_{+} \Aut_{R}^{c}M} \Si_{+} \Aut_{R}^{c} M) \ar[r] \ar[dr]^{\cong} &  \Om \bF^{R}(M, M \sma_{\Si_{+} \Aut_{R}^{c}M} \Si_{+} \End_{R} M)  \ar[d] \\
 & \Om \bF^{R}(M, M) 
}
\]
Here the horizontal maps are induced by the composite 
\[
\Aut_{R}^{c}M \arr \Aut_{R} M \arr \End_{R} M
\]
of the cofibrant approximation map and the canonical inclusion.  The diagonal map is induced by the action map
for the right $\Si_{+} \Aut_{R}^{c}M$-module structure on $M$ and it is an isomorphism as indicated.  Since $M$ is bifibrant, we may choose to represent the value of the derived functor $\Om \bF^{R}(M, M)$ in the homotopy category by $\End_{R}M$.  A diagram chase involving the triangle identities for the adjunctions shows that the right vertical composite is then the identity map.  It follows that the left vertical composite factors through $\Aut_{R} M$ via the cofibrant approximation map, and so the map of components 
\[
\pi_0 \bF_b Y \arr \pi_0 \bF_b \Omega^{\infty}_{B} \bF^{R}(M, \bT Y)
\]
lands in the subset $\pi_0 \boldsymbol{\mu} \bF_b \Omega^{\infty}_{B} \bF^{R}(M, \bT Y)$.  This establishes the factorization in diagram \eqref{diagram:wanted_factorization}, and so we have constructed the natural transformation $\zeta \colon Y \arr \bE \bT Y$.

As a consequence of the preceding argument, we see that up to natural isomorphisms in the domain and codomain, $\bF_{b} \zeta$ may be identified with the cofibrant approximation map $\Aut_{R}^{c} M \arr \Aut_{R} M$.  It follows that $\zeta$ is a natural isomorphism of derived functors.

Now let $N$ be an $R$-bundle with fiber $M$.  Define $\xi \colon \bT \bE N \arr N$ to be the composite
\[
\bT \bE^{R}(M, N) \overset{\iota}{\arr} \bT \Om_{B} \bF^{R}(M, N) \overset{\epsilon}{\arr} N
\]
of the map induced by the inclusion $\iota \colon \bE^{R}(M, N) \arr \Om_{B} \bF^{R}(M, N)$ followed by the counit of the adjunction $(\bT, \boldsymbol{ \Omega})$.  After applying the derived fiber functor $\bF_{b}$, commuting it through to the variable $N$, and using a chosen equivalence $\bF_{b} N \simeq M$, an argument similar to that just given for $\zeta$ proves that $\bF_{b} \xi$ is a fiberwise equivalence.  Hence $\xi$ also induces a natural isomorphism of derived functors.

\end{proof}

\begin{proof}[Proof of Theorem \ref{thm:classification_of_Rbundles}]
We return to the general situation of a well-grounded ring spectrum $R$ and an $R$-module $M$.  Take an $s$-cofibrant approximation $R'$ of $R$ as a ring spectrum and an $s$-bifibrant approximation $M'$ of $M$ as an $R$-module so that the material in the last two sections applies.  The derived mapping space $\mathbf{Aut}_{R}M$ of homotopy automorphisms of $M$ has a point-set model given by the space $\Aut_{R'}^{c}M'$.  Theorem \ref{thm:new_classify_Gfibs} and Theorem \ref{torsor_bundle_equiv_theorem} combine to give that homotopy classes of maps $[X, B \mathbf{Aut}_{R}M]$ are in bijective correspondence with equivalence classes of $R'$-bundles with fiber $M'$.  The homotopy category of parametrized $R$-modules and the homotopy category of parametrized $R'$-modules are equivalent by pullback along the approximation map \cite{MS}*{14.1.9}, and the definition of an $R$-bundle with fiber $M$ is invariant under stable equivalences in the entry $M$, so it follows that equivalence classes of $R$-bundles with fiber $M$ are in bijective correspondence with equivalence classes of $R'$-bundles with fiber $M'$.  This completes the proof.
\end{proof}

\section{Lifted $R$-bundles and algebraic $K$-theory}\label{proving_main_theorem_section}

In this section we will prove Theorem \ref{main_theorem}.  
The arguments are adapted from \citelist{\cite{Kar} \cite{BDR}}. Let $X$ be a finite CW complex and let $R$ be a connective ring spectrum.  Let 
\[
\mGL_{n}R = \mathbf{Aut}_{R}(R^{\vee n})
\]
be the derived mapping space of homotopy automorphisms of the $n$-fold wedge sum $R^{\vee n}$ with the topological monoid structure coming from composition of maps.  By Theorem \ref{thm:classification_of_Rbundles}, the classifying space $B \mGL_{n}R$ classifies stable equivalence classes of $R$-bundles with fiber $R^{\vee n}$.  Let $B \mGL_{\infty} R = \hocolim_{n} B \mGL_n R$.  Recall the following description of the infinite loop space underlying the algebraic $K$-theory spectrum of $R$:
\[
\Omega^{\infty} K(R) \simeq K_0 R \times B \mGL_{\infty} R^{+}
\]
The group $K_0 R = K^{f}_0 \pi_0 R$ is the algebraic $K$-theory of free $\pi_0 R$-modules, and the plus denotes Quillen's plus construction with respect to the commutator subgroup of $\pi_1 B \mGL_{\infty} R$.  Since the plus construction changes the homotopy type in general, we will need to work with lifted bundles, in the following sense.

\begin{definition}
A lifted $R$-bundle over $X$ is the data of:
\begin{itemize}
\item[(i)] An $H_*$-acyclic fibration $p \colon Y \arr X$ of CW complexes, by which we mean a Serre fibration with $\widetilde{H}_{*}(\mathrm{fiber}(p);\bZ) = 0$.
\item[(ii)] An $R$-bundle $E$ over $Y$.
\end{itemize}
We say that a lifted $R$-bundle $(E, Y, p)$ over $X$ is free if every fiber of $E$ admits a stable equivalence of $R$-modules $E_{y} \simeq R^{\vee n}$ for some $n$.
\end{definition}
Define a relation on lifted $R$-bundles over $X$ by declaring $(E, Y, p) \sim (E', Y', p')$ if there exists a map $f \colon Y \arr Y'$ over $X$ such that the induced map of $R$-modules $E \arr f^*E'$ over $Y$ is a stable equivalence.  This does \emph{not} define an equivalence relation in general, so we will work with the equivalence relation on lifted $R$-bundles over $E$ generated by $\sim$.  When convenient, we make the abbreviation $(E, Y) = (E, Y, p)$.

We assume from now on that $X$ is a finite CW complex.  Let $\Phi_{R}(X)$ be the set of equivalence classes of lifted free $R$-bundles over $X$.  The set $\Phi_{R}(X)$ is an abelian monoid under the operation $(E_1, Y_1) \oplus (E_2, Y_2)$ taking a pair of lifted $R$-bundles over $X$ to the lifted $R$-bundle 
\[
(g_1^*E_1 \vee_{Z} g_2^*E_2, Z), \quad \text{where $Z$ is the pullback} \quad \xymatrix{
Z \ar[r]^{g_2} \ar[d]_{g_1} & Y_2 \ar[d] \\
Y_1 \ar[r] & X }
\]
The zero of $\Phi_{R}(X)$ is the trivial $R$-bundle $(\ast_{X}, X)$ over $X$.  Let $\overline{K}_{R}(X)$ be the Grothendieck group of the monoid $\Phi_{R}(X)$.

We say that a lifted $R$-bundle is virtually trivial if there exists a space $T$ such that $\widetilde{H}_*(T ; \bZ) = 0$ and a map $f \colon Y \arr T$ (not necessarily over $X$) along with an $R$-bundle $(E', T)$ over $T$ and a stable equivalence of $R$-bundles $E \simeq f^*E'$.

\begin{lemma}\label{virtual_inverse_lemma}
Let $(E_1, Y_1)$ be a lifted free $R$-bundle over $X$.  Then there exists a lifted free $R$-bundle $(E_2, Y_2)$ over $X$ such that $(E_1, Y_1) \oplus(E_2, Y_2)$ is virtually trivial.
\end{lemma}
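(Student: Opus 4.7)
The plan is to realize the additive inverse of $[E_1]$ in $K(R)^{0}(X)$ by an honest lifted free $R$-bundle, exploiting the classification theorem together with the fact that $B\mathrm{GL}_\infty R^+$ is plus-local. By Theorem \ref{thm:classification_of_Rbundles}, $E_1$ is classified by a map $f_1 \colon Y_1 \arr B\mathrm{GL}_n R$. Since $p \colon Y_1 \arr X$ is an $H_*$-acyclic fibration, it is an acyclic map in the sense of the plus construction, and the composite
\[
Y_1 \xrightarrow{f_1} B\mathrm{GL}_n R \arr B\mathrm{GL}_\infty R \arr B\mathrm{GL}_\infty R^+
\]
descends uniquely up to homotopy through $p$ to a map $\hat f_1 \colon X \arr B\mathrm{GL}_\infty R^+$. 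Since $\Omega^{\infty}K(R) \simeq K_0(R) \times B\mathrm{GL}_\infty R^+$ is a grouplike $H$-space, we form the additive inverse $g = -\hat f_1$.

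To realize $g$ by a bundle, pull back the acyclic fibration $B\mathrm{GL}_\infty R \arr B\mathrm{GL}_\infty R^+$ along $g$, obtaining an $H_*$-acyclic fibration $Y_2 \arr X$ with a lift $\tilde g \colon Y_2 \arr B\mathrm{GL}_\infty R$. Using the finiteness of $X$ together with the filtered colimit description $B\mathrm{GL}_\infty R = \colim_m B\mathrm{GL}_m R$, a compactness and cellular approximation argument lets us arrange $\tilde g$ to factor through $B\mathrm{GL}_m R$ for some finite $m$. By Theorem \ref{thm:classification_of_Rbundles} in reverse, this produces a lifted free $R$-bundle $(E_2, Y_2)$ with fiber $R^{\vee m}$.

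Form the sum $(E, Z) = (E_1, Y_1) \oplus (E_2, Y_2)$ where $Z = Y_1 \times_X Y_2$ is still an $H_*$-acyclic fibration over $X$. The classifying map $h \colon Z \arr B\mathrm{GL}_{n+m} R$ of $E$, composed with $B\mathrm{GL}_{n+m} R \arr B\mathrm{GL}_\infty R^+$, descends to $\hat f_1 + g \simeq \ast$ on $X$, hence is null-homotopic; a null-homotopy factors $h$ through the homotopy fiber $F_{n+m}$ of $B\mathrm{GL}_{n+m} R \arr B\mathrm{GL}_\infty R^+$. After stabilizing by a trivial $R^{\vee k}$-summand (replacing $E_2$ by $E_2 \vee R^{\vee k}_{Y_2}$, still a lifted free $R$-bundle) and passing to $B\mathrm{GL}_{n+m+k} R$ for $k$ sufficiently large, the corresponding homotopy fiber $F_{k}$ becomes integrally acyclic in the range of homological degrees relevant to $Z$, by $K$-theoretic homology stability applied to the finite-dimensional situation. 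Pulling the universal rank-$(n+m+k)$ bundle on $B\mathrm{GL}_{n+m+k} R$ back along $F_{k} \arr B\mathrm{GL}_{n+m+k} R$ yields an $R$-bundle on the acyclic space $F_{k}$ whose pullback along $Z \arr F_{k}$ recovers $E \vee R^{\vee k}_{Z}$, which is the classifying bundle for the new sum. This exhibits $(E_1, Y_1) \oplus (E_2 \vee R^{\vee k}_{Y_2}, Y_2)$ as virtually trivial.

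The main obstacle is the pair of stability arguments: reducing $\tilde g$ from the ind-space $B\mathrm{GL}_\infty R$ to a finite stage $B\mathrm{GL}_m R$, and arranging integral acyclicity of the homotopy fiber $F_{k}$ in the range prescribed by the homology of $Z$ (which agrees with that of the finite-dimensional $X$). Both steps rely on the finiteness of $X$ to reduce to a bounded homological range; the delicate point is ensuring that enlarging the rank by a single finite $k$ is sufficient for the relevant acyclicity, so that one does not have to work with genuinely infinite-rank bundles.
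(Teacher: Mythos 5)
The first half of your argument is essentially the paper's: classify $E_1$, descend the classifying map along the acyclic fibration $Y_1 \to X$ into the plus construction (by Hausmann--Husemoller), use the grouplike $H$-space structure to build an inverse class, pull back to get a lifted free $R$-bundle $(E_2, Y_2)$, and aim to show the Whitney sum factors through an acyclic space.

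The final step, however, has a genuine gap. You factor the classifying map $h \colon Z \to B\mGL_{n+m}R$ through the homotopy fiber $F_{n+m}$ of $B\mGL_{n+m}R \to B\mGL_{\infty}R^+$. That fiber is \emph{not} $H_*$-acyclic, because $B\mGL_{n+m}R \to B\mGL_{\infty}R^+$ is not an acyclic map. This is what forces you into the homology stability detour, which is both unnecessary and problematic: (i) integral homology stability for $B\mGL_n$ is not known for a general connective ring spectrum $R$ (it requires stable-rank type hypotheses even in the discrete case); and (ii) even granting acyclicity in a range, the definition of virtually trivial asks for a map to a space $T$ with $\widetilde H_*(T;\bZ)=0$ in \emph{all} degrees --- ``acyclic in the range relevant to $Z$'' does not yield a factorization through an honestly acyclic space, and there is no evident way to truncate $F_k$ compatibly with $Z \to F_k$.

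The correct and simpler move is to use the finite-stage plus construction. Arrange $g_2$ to land in $B\mGL_m R^{+}$ for a finite $m$ (finiteness of $X$ lets you do this) so that $g_1 \oplus g_2 \colon X \to B\mGL_{m+n}R^{+}$ is nullhomotopic, then observe that $h$ is a lift of $g_1 \oplus g_2$ along $B\mGL_{m+n}R \to B\mGL_{m+n}R^{+}$. The homotopy fiber of \emph{this} map is $H_*$-acyclic by the defining property of the plus construction, so $h$ factors through an acyclic space with no stability argument required. With that correction, the two proofs coincide.
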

\begin{proof}
Let $f_1 \colon Y_1 \arr B \mGL_n R$ be a classifying map for $E_1$.  Let $P$ be the homotopy fiber of the $H_*$-acyclic fibration $Y_1 \arr  X$.  By \cite{haus_hus}*{1.3}, the kernel of $\pi_1 Y_1 \arr \pi_1 X$ is the perfect normal subgroup $\im (\pi_1 P \arr \pi_1 Y_1)$.  This is annihilated by the following map to the plus construction:
\[
\pi_1 P \arr \pi_1 Y_1 \overset{f_1}{\arr} \pi_1 B\mGL_nR \arr \pi_1 B\mGL_nR^{+}.
\]
By \cite{haus_hus}*{3.1}, $f_1$ descends to a map $g_1 \colon X \arr  B\mGL_nR^{+}$.  Use the grouplike $H$-space structure on $B \mGL_{\infty} R^{+}$ to find $g_2 \colon X \arr  B\mGL_m R^{+}$ such that $g_1 \oplus g_2 \colon X \arr B \mGL_{m + n} R^{+}$ is nullhomotopic.  Define $Y_2$ as the following pullback:
\[
\xymatrix{
Y_2 \ar[r]^-{f_2} \ar[d] & B \mGL_m R \ar[d] \\
X \ar[r]^-{g_2} & B \mGL_m R^{+} }
\]
We choose a model for the plus construction such that the right vertical map (and thus the left vertical map) is a $q$-fibration of CW complexes.  Let $E_2$ be the free $R$-bundle over $Y_2$ classified by the map $f_2$.  The sum $(E_1, Y_1) \oplus (E_2, Y_2)$ is a lifted $R$-bundle over the pullback $Y = Y_1 \times_{X} Y_2$ that is classified by a lift $f \colon Y \arr B \mGL_{m + n} R$ of $g_1 \oplus g_2$.  Thus $f$ is nullhomotopic, so it factors through the $H_*$-acyclic fiber of $B \mGL_{m + n} R \arr B \mGL_{m + n} R^{+}$, proving that $(E_1, Y_1) \oplus (E_2, Y_2)$ is virtually trivial.
\end{proof}

Given any space $X$, we generically write $r \colon X \arr \ast$ for the canonical map to a point, so that the pullback $r^* M$ is the trivially twisted $R$-bundle with fiber $M$.

\begin{lemma}
Suppose that $(E, Y)$ is a virtually trivial lifted $R$-bundle over $X$.  Then there exists a lifted $R$-bundle $(r^* M, Y')$ over $X$ that is equivalent to $(E, Y)$ as a lifted $R$-bundle: $[(E, Y)] = [(r^* M, Y')]$ in $\Phi_{R}(X)$.  If $E$ is a free $R$-bundle, then $M = R^{\vee n}$ for some $n$.
\end{lemma}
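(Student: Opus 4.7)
The plan is to show that $[(E, Y)] = [(r^{*}M, Y)]$ in $\Phi_{R}(X)$, so that we may take $Y' = Y$, with $M = E'_{t_{0}}$ for a chosen basepoint $t_{0} \in T$. The clever ingredient is the intermediate lifted $R$-bundle we pass through: the product cover $Y'' := Y \times T$ of $Y$, on which two different sections over $Y$ will simultaneously connect the pulled-back $R$-bundle to both $E$ and to the trivial bundle with fiber $M$.

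First, let $f \colon Y \arr T$ and $(E', T)$ witness the virtual triviality of $(E, Y)$, providing a stable equivalence $E \simeq f^{*}E'$ of $R$-bundles. After replacing $E'$ by an $s$-fibrant model as an $R$-module over $T$ (which does not disturb the witness), the strict fiber $E'_{t_{0}}$ represents the derived fiber $\bF_{t_{0}}E'$ for each $t_{0} \in T$. Equip $Y'' = Y \times T$ with the structure map to $X$ obtained by composing the projection $\pi_{Y} \colon Y \times T \arr Y$ with $p$; this is a composite of $q$-fibrations, and its fiber over any $x \in X$ is $p^{-1}(x) \times T$. By K\"unneth, which has no Tor contributions since the homology of both factors is $\bZ$-free and concentrated in degree zero, the product $p^{-1}(x) \times T$ is again $H_{*}$-acyclic. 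Thus $Y'' \arr X$ is an $H_{*}$-acyclic $q$-fibration, and writing $\pi_{T} \colon Y \times T \arr T$ for the projection, the pair $(\pi_{T}^{*}E', Y'')$ is a lifted $R$-bundle over $X$.

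Next, fix $t_{0} \in T$ and introduce two maps of spaces over $X$ from $Y$ to $Y''$: the graph $g(y) = (y, f(y))$ and the slice $g_{0}(y) = (y, t_{0})$. Both satisfy $\pi_{Y} \circ g = \id_{Y} = \pi_{Y} \circ g_{0}$, hence both are over $X$, and they satisfy $\pi_{T} \circ g = f$ while $\pi_{T} \circ g_{0} = \mathrm{const}_{t_{0}}$. Functoriality of pullback yields canonical identifications
\[
g^{*}\pi_{T}^{*}E' \cong f^{*}E' \simeq E \qquad \text{and} \qquad g_{0}^{*}\pi_{T}^{*}E' \cong \mathrm{const}_{t_{0}}^{*}E' \cong r^{*}M,
\]
where $M = E'_{t_{0}}$ and $r \colon Y \arr \ast$. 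These are two instances of the relation $\sim$, namely $(E, Y) \sim (\pi_{T}^{*}E', Y'')$ and $(r^{*}M, Y) \sim (\pi_{T}^{*}E', Y'')$, so $[(E, Y)] = [(r^{*}M, Y)]$ in $\Phi_{R}(X)$. For the final clause, $T$ is $H_{0}$-acyclic and therefore connected, so all derived fibers of $E'$ are stably equivalent; if $E$ is free of rank $n$, then $M \simeq E'_{f(y)} \simeq E_{y} \simeq R^{\vee n}$ for any $y \in Y$.

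The main subtlety I expect is the $s$-fibrant replacement step and the verification that the strict pullback $\mathrm{const}_{t_{0}}^{*}E'$ represents the trivial $R$-bundle $r^{*}M$ at the derived level, as required for the relation $\sim$ to be witnessed by a genuine stable equivalence. Beyond that, the argument only uses functoriality of pullback together with the elementary K\"unneth check that a product of two $H_{*}$-acyclic spaces is $H_{*}$-acyclic.
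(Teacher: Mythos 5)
Your proof is correct and follows essentially the same route as the paper's: introduce an intermediate lifted cover that carries a factor of $T$, pull back $E'$ along the projection to $T$, and use a ``graph'' section and a ``slice-at-$t_0$'' section to produce the two instances of $\sim$ linking $(E,Y)$ to the trivial bundle $r^*M$. The only difference is cosmetic: the paper takes the intermediate cover to be $\pi_2\colon T\times X\to X$ with the slice section $\chi\colon X\to T\times X$ landing the trivial bundle directly over $X$, whereas you take $p\circ\pi_Y\colon Y\times T\to X$ with both sections emanating from $Y$, landing the trivial bundle over $Y$; both satisfy the lemma's conclusion, and your extra care about $s$-fibrant replacement and the K\"unneth check is sound though not emphasized in the paper.
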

\begin{proof}
We are given an $H_*$-acyclic fibration $p \colon Y \arr X$, a map $f \colon Y \arr T$ where $\widetilde{H}_{*}(T) = 0$ and a stable equivalence $E \simeq f^* E'$ where $E'$ is an $R$-bundle over $T$.  Choose a point $t \colon * \arr T$.  Consider the following commutative diagram: 
\[
\xymatrix{
 & Y \ar[dl]_{p} \ar[d]_{g} \ar[dr]^{f} & \\
X &  T \times X \ar[l]_{\pi_2} \ar[r]^{\pi_1} & T \\
 & X \ar[ul]^{\id} \ar[u]^{\chi} \ar[r]_{r} & \ast \ar[u]_{t} }
\]
where $g(y) = (f(y), p(y))$ and $\chi(x) = (t, x)$.  The maps $p, \pi_2$ and $\id$ are all $H_*$-acyclic fibrations.  Form the $R$-bundle $\pi_1^* E'$ over $T \times X$.  Then we have a stable equivalence of $R$-bundles $g^* \pi_1^* E' = f^* E' \simeq E$ over $Y$.  On the other hand $\chi^* \pi_1^* E' \cong r^* t^* E'$ is a trivial bundle over $X$ with fiber $M = t^* E'$, since $t \circ r$ factors through a point.  The two triangles on the left show that $(E, Y) \sim (\pi_1^* E', T \times X)$ and $(\pi_1^* E', T \times X) \sim (r^* M, X)$.
\end{proof}

Consider the abelian group $K_0(R)$ as a discrete set and let $[X, K_0(R)]$ be the set of homotopy classes of maps from $X$, considered as an abelian group under the pointwise addition in the abelian group $K_0(R)$. Let $\psi \colon \Phi_{R}(X) \arr [X, K_0(R)]$ be the function that takes a lifted free $R$-bundle $(E, Y, p)$ to the map sending $x \in X$ to the equivalence class of the free $R$-module $E_x = (p \circ i)^*E$, where $i \colon \ast \arr Y$ is a choice of a point lying in the fiber of $p$ over $x$.  Since the fibers of $p$ are path-connected, different choices give the same equivalence class in $K_0(R)$ and it is easy to see that the definition depends only on the equivalence class of the lifted free $R$-bundle.  Since the abelian group structure on $K_0(R)$ is induced by the wedge sum of free $R$-modules, the function $\psi$ is a monoid homomorphism.  Let $\overline{\psi} \colon \overline{K}_R(X) \arr [X, K_0(R)]$ be the extension of $\psi$ to the Grothendieck group.  There is a natural splitting 
\[
\overline{K}_{R}(X) \cong \ker \overline{\psi} \oplus [X, K_0(R)]
\]
induced by the section of $\psi$ that takes an equivalence class $[R^{\vee n}] \in K_0(R)$ indexed by a path-component of $X$ to the trivially twisted $R$-bundle $r^* R^{\vee n}$ over that component.
Let $\Phi_R^n(X)$ be the set of equivalence classes of lifted $R$-bundles of rank $n$.  

\begin{proposition}  There is a natural isomorphism
\[
\ker \overline{\psi} \cong \colim_{n} \Phi_{R}^{n}(X).
\]
\end{proposition}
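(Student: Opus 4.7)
The plan is to construct a monoid isomorphism
\[
\Theta \colon \colim_n \Phi_{R}^{n}(X) \arr \ker \psi
\]
directly, where the stabilization maps $\Phi_{R}^{n}(X) \arr \Phi_{R}^{n+1}(X)$ are given by $[(E, Y)] \mapsto [(E \vee_Y r_Y^* R, Y)]$ (with $r_Y \colon Y \arr \ast$) and
\[
\Theta[(E, Y)] = [(E, Y)] - n \cdot [(r^* R, X)] \in \overline{K}_{R}(X)
\]
for $[(E, Y)] \in \Phi_{R}^{n}(X)$, with $r \colon X \arr \ast$. The bijectivity will come almost for free from Lemma \ref{virtual_inverse_lemma} together with the subsequent virtual triviality lemma.

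First I would verify $\Theta$ is well-defined. Since $r_Y = r \circ p$, one has $r_Y^* R = p^* r^* R$, so the structure map $p \colon Y \arr X$ exhibits $(r_Y^* R, Y) \sim (r^* R, X)$; hence $[(r_Y^*R, Y)] = [(r^*R, X)]$ in $\Phi_R(X)$, and $\Theta$ descends through the stabilization maps. The image lies in $\ker \psi$ because a free $R$-bundle of rank $n$ and $r^* R^{\vee n}$ represent the same element of $[X, K_0(R)]$.

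For surjectivity, write an element of $\ker \psi$ as $[(E_1, Y_1)] - [(E_2, Y_2)]$. By Lemma \ref{virtual_inverse_lemma}, choose $(E_2', Y_2')$ so that $(E_2, Y_2) \oplus (E_2', Y_2')$ is virtually trivial; the subsequent lemma identifies this sum in $\Phi_R(X)$ with $(r^*R^{\vee m}, X)$ for some $m$. Substitution gives
\[
[(E_1, Y_1)] - [(E_2, Y_2)] = [(E_1 \oplus E_2', W)] - [(r^* R^{\vee m}, X)]
\]
in $\overline{K}_R(X)$, where $W = Y_1 \times_X Y_2'$. The hypothesis that this lies in $\ker \psi$ forces $\mathrm{rk}(E_1 \oplus E_2') = m$, exhibiting the element as $\Theta[(E_1 \oplus E_2', W)]$.

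Injectivity follows by the same recipe: if $\Theta[(E_1, Y_1)] = \Theta[(E_2, Y_2)]$ in $\ker \psi$ with both bundles of rank $n$ (possibly after stabilizing), then $[(E_1, Y_1)] = [(E_2, Y_2)]$ in $\overline{K}_R(X)$, so there exists $(F, Z)$ with $[(E_1, Y_1)] \oplus [(F, Z)] = [(E_2, Y_2)] \oplus [(F, Z)]$ in $\Phi_R(X)$. A second use of Lemma \ref{virtual_inverse_lemma} produces $(F', Z')$ with $(F, Z) \oplus (F', Z') \sim (r^*R^{\vee m}, X)$, and adding $[(F', Z')]$ to both sides recasts the equality as an identity in $\Phi_{R}^{n+m}(X)$, proving the two classes agree in the colimit. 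That $\Theta$ is a monoid homomorphism is then immediate from the definitions. The main subtlety I anticipate is that the generating relation $\sim$ is not itself transitive, so one must be careful when juggling identities between $\Phi_R(X)$ and $\overline{K}_R(X)$; however, every manipulation above only absorbs copies of trivial bundles $(r^*R, X)$ via base change, where the equivalence relation is transparent.
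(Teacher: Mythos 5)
Your proposal is correct and takes essentially the same route as the paper: both arguments define the map $[(E,Y)] \mapsto [E] - [T_n]$ from $\colim_n \Phi_R^n(X)$ to $\ker\psi$, use Lemma \ref{virtual_inverse_lemma} together with the virtual-triviality lemma to produce a complement $F'$ with $F\oplus F'$ equivalent to a trivial bundle, and thereby invert the map on $\ker\psi$ via $[E]-[F]\mapsto[E\oplus F']$. The paper states the two maps and leaves the mutual-inverse verification implicit, whereas you spell out well-definedness under stabilization, surjectivity, and injectivity; this is a more detailed rendering of the same argument rather than a different one.
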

\begin{proof}
Suppose that $[E] - [F]$ is a formal difference of lifted free $R$-bundles in $\ker \overline{\psi}$.  We associate to $[E] - [F]$ the element $[E \oplus F'] \in \colim_{n} \Phi_{R}^{n}(X)$ where $F'$ is a lifted free $R$-bundle such that $F \oplus F'$ is virtually trivial (Lemma \ref{virtual_inverse_lemma}).  Conversely, to a class $[E] \in \Phi^{n}_{R}(X)$ we associate the formal difference $[E] - [T_n] \in \ker \overline{\psi}$, where $T_n = r^* R^{\vee n}$ is the trivial $R$-bundle of rank $n$. 
\end{proof}

\begin{proposition}  There is a natural isomorphism
\[
\colim_{n} \Phi_{R}^{n}(X) \cong [X, B\mGL_{\infty}(R)^{+}].
\]
\end{proposition}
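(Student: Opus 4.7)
The plan is to construct mutually inverse maps
\[
\Psi \colon \colim_n \Phi_R^n(X) \arr [X, B\mGL_{\infty} R^{+}] \qquad\text{and}\qquad \Phi \colon [X, B\mGL_{\infty} R^{+}] \arr \colim_n \Phi_R^n(X),
\]
using the classification theorem for $R$-bundles (Theorem \ref{thm:classification_of_Rbundles}) together with the universal property of Quillen's plus construction due to Hausmann--Husemoller \cite{haus_hus}.

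The construction of $\Psi$ extends the argument appearing in the proof of Lemma \ref{virtual_inverse_lemma}.  Given a lifted free $R$-bundle $(E, Y, p)$ of rank $n$, Theorem \ref{thm:classification_of_Rbundles} yields a classifying map $f \colon Y \arr B \mGL_n R$.  The fiber of $p$ is $H_{*}$-acyclic and in particular connected, so by \cite{haus_hus}*{1.3} the kernel $\ker(\pi_1 Y \arr \pi_1 X)$ is a perfect normal subgroup of $\pi_1 Y$; its image in $\pi_1 B \mGL_n R$ is perfect and hence killed by the plus map, so \cite{haus_hus}*{3.1} descends $f$ up to homotopy to a unique map $g \colon X \arr B \mGL_n R^{+}$.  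I set $\Psi[(E, Y, p)]$ equal to the image of $[g]$ in $[X, B\mGL_{\infty} R^{+}]$.  Well-definedness on the equivalence relation and compatibility with the rank-stabilization maps $\Phi_R^n(X) \arr \Phi_R^{n+1}(X)$ both follow from the uniqueness of the descent, together with the facts that stabilization by block sum with a trivial line corresponds on classifying spaces to the canonical inclusion $B\mGL_n R^{+} \arr B\mGL_{n+1} R^{+}$ and that the classifying map is natural under pullback along maps of the base.

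For $\Phi$, I first choose a model in which $\pi_{+} \colon B\mGL_n R \arr B\mGL_n R^{+}$ is a $q$-fibration with $H_{*}$-acyclic fibers, so that every pullback of $\pi_{+}$ is an $H_{*}$-acyclic fibration.  Since $X$ is finite, any $g \colon X \arr B\mGL_{\infty} R^{+}$ factors through some $g_n \colon X \arr B\mGL_n R^{+}$, and I form the pullback
\[
\xymatrix{
Y \ar[r]^-{\widetilde{g}_n} \ar[d]_{p} & B\mGL_n R \ar[d]^{\pi_{+}} \\
X \ar[r]_-{g_n} & B\mGL_n R^{+}
}
\]
so that $p$ is an $H_{*}$-acyclic fibration and $\widetilde{g}_n$ classifies a free rank $n$ bundle $E$ over $Y$; set $\Phi[g] = [(E, Y, p)]$.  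Independence of the choice of $n$ is built into the colimit.  For homotopy invariance, I would pull back a given homotopy $H \colon X \x I \arr B\mGL_n R^{+}$ to a lifted rank $n$ bundle $(\widetilde{E}, \widetilde{Y})$ over $X \x I$; composing with the projection $X \x I \arr X$ gives a lifted bundle over $X$, since the composition of two $H_{*}$-acyclic fibrations is $H_{*}$-acyclic by the Serre spectral sequence, and the inclusions $Y_i \hookrightarrow \widetilde{Y}$ are maps over $X$ realizing $(E_i, Y_i) \sim (\widetilde{E}, \widetilde{Y})$ for $i = 0, 1$.

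The main obstacle will be to verify that $\Psi$ and $\Phi$ are mutually inverse.  For $\Psi \Phi = \id$: by construction of the pullback, $\pi_{+} \circ \widetilde{g}_n = g_n \circ p$, so $g_n$ is a descent of $\pi_{+} \circ \widetilde{g}_n$ along $p$, and the uniqueness of descent in \cite{haus_hus}*{3.1} gives $\Psi \Phi[g_n] = [g_n]$.  For $\Phi \Psi = \id$: with $\pi_{+}$ modelled as a fibration, I use the homotopy lifting property to replace $f$ within its homotopy class by a map $f'$ satisfying $\pi_{+} \circ f' = g \circ p$ strictly, where $g$ is the descent produced by $\Psi$.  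The universal property of the pullback defining $\Phi[g] = (E', Y', p')$ then produces a map $h \colon Y \arr Y'$ over $X$ with $\widetilde{g} \circ h = f'$, so that $h^{*}E' \simeq E$ and therefore $(E, Y, p) \sim (E', Y', p')$.  Together with the preceding two propositions, this completes the proof of Theorem \ref{main_theorem}.
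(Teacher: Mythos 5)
Your proposal is correct and follows essentially the same approach as the paper: in both directions you use Hausmann--Husemoller descent along the $H_{*}$-acyclic fibration $p$ (as already set up in Lemma \ref{virtual_inverse_lemma}) to pass from $Y$ to $X$, and the pullback of a fibrant model of $\pi_{+} \colon B\mGL_n R \arr B\mGL_n R^{+}$ to go back. The paper states the two constructions and stops there, whereas you additionally verify well-definedness, homotopy invariance, and mutual inverseness — checks the paper leaves implicit — and your arguments for these (uniqueness of descent for $\Psi\Phi = \id$, and the universal property of the pullback after rigidifying $f$ by the CHP for $\Phi\Psi = \id$) are sound.
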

\begin{proof}
Given the class of a lifted free $R$-bundle $(E, Y)$ over $X$ in $\colim_{n} \Phi_{R}^{n}(X)$, the arguments of Lemma \ref{virtual_inverse_lemma} show that the classifying map $f$ of $E$ extends to a map $g$ from $X$ to the plus construction:
\[
\xymatrix{
Y \ar[r]^-{f} \ar[d]_{p} & B \mGL_{n} R \ar[d] \\
X \ar[r]^-{g} & B \mGL_n R^{+} }
\]
Conversely, given a classifying map $g$ define $Y$ as the pullback displayed in the same diagram.   Then $p$ is an $H_*$-acyclic fibration and $f$ classifies a lifted free $R$-bundle $(E, Y)$ over $X$.  
\end{proof}

All together, we have proved:
\[
\overline{K}_{R}(X) \cong [X, K_0(R)] \oplus [X, B \mGL_{\infty}(R)^{+}]  \cong [X, \Omega^{\infty}K(R)].
\]
This completes the proof of Theorem \ref{main_theorem}.

\begin{bibdiv}
\begin{biblist}

\bib{ABG1}{article}{
title={Twists of $K$-theory and $tmf$}
author={M. Ando}
author={A.J. Blumberg}
author={D. Gepner}
journal={Proc. Sympos. Pure Math.}
volume={81}
pages={27--63}
publisher={Amer. Math. Soc.}
date={2010}
}

\bib{ABG2}{article}{
title={Parametrized spectra, multiplicative Thom spectra, and the twisted Umkehr map}
author={M. Ando}
author={A.J. Blumberg}
author={D. Gepner}
journal={arXiv:1112.2203 [math.AT]}
}

\bib{ABGHR1}{article}{
title={Units of ring spectra, orientations, and Thom spectra via rigid infinite loop space theory}
author={M. Ando}
author={A.J. Blumberg}
author={D. Gepner}
author={M.J. Hopkins}
author={C. Rezk}
journal={J. Topol.}
volume={7}
date={2014}
number={4}
pages={1077--1117}
}

\bib{ABGHR2}{article}{
title={An $\infty$-categorical approach to R-line bundles, R-module Thom spectra, and twisted R-homology}
author={M. Ando}
author={A.J. Blumberg}
author={D. Gepner}
author={M.J. Hopkins}
author={C. Rezk}
journal={J. Topol.}
volume={7}
date={2014}
number={3}
pages={869--893}
}

\bib{BBK}{article}{
AUTHOR = {Baas, Nils A.}
author={B{\"o}kstedt, Marcel}
author={Kro, Tore August},
     TITLE = {Two-categorical bundles and their classifying spaces},
   JOURNAL = {J. K-Theory},
  FJOURNAL = {Journal of K-Theory. K-Theory and its Applications in Algebra,
              Geometry, Analysis \& Topology},
    VOLUME = {10},
      YEAR = {2012},
    NUMBER = {2},
     PAGES = {299--369},
      ISSN = {1865-2433},
   MRCLASS = {55R65 (18D05 19D99)},
  MRNUMBER = {3004171},
MRREVIEWER = {Matthias Wendt},
       URL = {http://dx.doi.org/10.1017/is012001012jkt181},
       }

\bib{BDR}{book}{
title={Two-vector bundles and forms of elliptic cohomology}
author={N. Baas}
author={B. Dundas} 
author={J. Rognes}
date={2004}
series={London Mathematical Society Lecture Notes}
volume={308}
pages={18--45}
}

\bib{BDRR}{article}{
title={Stable bundles over rig categories}
author={N. Baas}
author={B. Dundas} 
author={B. Richter}
author={J. Rognes}
journal={J. Topol.}
volume={4}
year={2011}
number={3}
pages={623--640}
}


\bib{cohen_jones}{article}{
title={Gauge theory and string topology}
author={R. Cohen}
author={J.D.S. Jones}
journal={arXiv:1304.0613}
}

\bib{cohen_jonesII}{article}{
title={Homotopy automorphisms of R-module bundles, and the K-theory of string topology}
author={R. Cohen}
author={J.D.S. Jones}
journal={arXiv:1310.4797}
}

\bib{EKMM}{book}{
title={Rings, modules, and algebras in stable homotopy theory}
author={A.D. Elmendorf}
author={I. Kriz}
author={M.A. Mandell}
author={J.P. May} 
date={1997}
series={Mathematical Surveys and Monographs}
volume={47}
publisher={American Mathematical Society}
}

\bib{haus_hus}{article}{
title={Acyclic maps}
author={J.C. Hausmann}
author={D. Husemoller}
journal={Enseign. Math.}
volume={25}
date={1979}
pages={53-75}
}

\bib{Kar}{book}{
title={Homologie cyclique at $K$-th\'eorie}
author={M. Karoubi}
date={1987}
series={Ast\'erisque}
volume={149}
}

\bib{diagram_spaces}{article}{
title={Diagram spaces, diagram spectra, and spectra of units}
author={J. A. Lind}
journal={Algebr. Geom. Topol.}
volume={13}
date={2013}
number={4}
pages={1857--1935}
}

\bib{HTT}{book}{
title={Higher topos theory}
author={J. Lurie}
series={Annals of Mathematics Studies}
vol={170}
publisher={Princeton University Press, Princeton, NJ}
year={2009}
}

\bib{MMSS}{article}{
title={Model categories of diagram spectra}
author={M.A. Mandell}
author={J.P. May} 
author={S. Schwede}
author={B. Shipley}
journal={Proc. London Math. Soc. (3)}
volume={82}
date={2001}
pages={441--512}
}

\bib{class_and_fib}{book}{
title={Classifying spaces and fibrations}
author={J.P. May}
date={1975}
series={Memoirs Amer. Math. Soc.}
volume={155}
}


\bib{MS}{book}{
title={Parametrized homotopy theory}
author={J.P. May}
author={J. Sigurdsson} 
date={2006}
series={Mathematical Surveys and Monographs}
volume={132}
publisher={American Mathematical Society}
}




\bib{schlichtkrull_units}{article}{
title={Units of ring spectra and their traces in algebraic K-theory}
author={C. Schlichtkrull}
journal={Geom. Topol.}
volume={8}
date={2004}
pages={645--673}
}


\bib{SS_groupcompletion}{article}{
title={Group completion and units in I-spaces}
author={S. Sagave}
author={C. Schlichtkrull}
journal={Algebr. Geom. Topol.}
volume={13}
date={2013}
number={2}
pages={625--686}
}


\bib{shulman_doubles}{article}{
title={Comparing composites of left and right derived functors}
author={M. Shulman}
journal={New York J. Math}
volume={17}
year={2011}
pages={75-125}
}





\end{biblist}
\end{bibdiv}

\end{document}